\newcommand{\abs}[1]{\left\lvert{#1}\right\rvert}
\newcommand\DOI[1]{{\tt DOI:#1}}
\newcommand\arXiv[1]{{\tt arXiv:#1}}
\newcommand\abbr[1]{\textsc{#1}}
\newcommand\junk[1]{}
\def\K{\mathbb{K}}
\def\Ga{\Gamma^+}
\newtheorem{theorem}{Theorem}
\newtheorem*{theorem*}{Theorem}
\newtheorem{lemma}[theorem]{Lemma}
\newtheorem{corollary}[theorem]{Corollary}
\newtheorem{conjecture}[theorem]{Conjecture}
\theoremstyle{definition}
\newtheorem{example}[theorem]{Example}
\title{On the Small Quasi-kernel conjecture --- a survey\footnote{A preliminary version of this survey was published as \cite{smallQK}.}}
\author[,1]{P\'eter L. Erdős{\protect\footnote{The authors were supported in part by the National Research, Development and Innovation Office -- NKFIH grant SNN~135643, K~132696.}}}
\author[$\dag$,1]{Ervin Győri}
\author[$\dag$,1]{Tam\'as Róbert Mezei}
\affil{Alfréd Rényi Institute of Mathematics (HUN-REN), Hungary\\
\texttt{$\langle$erdos.peter,gyori.ervin,mezei.tamas.robert$\rangle$@renyi.hun-ren.hu}}
\author[$\dag$,2]{Nika Salia}
\affil{King Fahd University of Petroleum $\&$ Minerals, Dhahran, Saudi Arabia\\ \texttt{salianika@gmail.com}}
\author{Mykhaylo~Tyomkyn{\protect\footnote{Supported in part by ERC Synergy Grant DYNASNET 810115 and GA\v{C}R Grant 22-19073S.}}}
\affil{Department of Applied Mathematics, Charles University\\  \texttt{tyomkyn@kam.mff.cuni.cz}}
\begin{document}
\maketitle

\begin{abstract}
An independent vertex subset $S$ of the directed graph $G$ is a \emph{kernel} if the set of out-neighbors of $S$ is $V(G)\setminus S$. An independent vertex subset $Q$ of $G$ is a \emph{quasi-kernel} if the union of the first and second out-neighbors contains $V(G)\setminus S$ as a subset. Deciding whether a directed graph has a kernel is an NP-hard problem. In stark contrast, each directed graph has quasi-kernel(s) and one can be found in linear time. In this article, we will survey the results on quasi-kernel and their connection with kernels. We will focus on the \emph{small quasi-kernel} conjecture which states that if the graph has no vertex of zero in-degree, then there exists a quasi-kernel of size not larger than half of the order of the graph. The paper also contains new proofs and some new results as well.
\end{abstract}

\section{Introduction}\label{sec:intro}

\noindent Let $G=(V(G), E(G))$ be a finite directed graph (\emph{digraph} for short) on $n$ vertices, without loops and multi-edges going in the same direction; anti-parallel edges are allowed. An edge $uv\in E(G)$ is oriented from its \emph{tail} $u$ towards its \emph{head} $v$, hence $v$ is an \emph{out-neighbor} of $u$, and $u$ is an \emph{in-neighbor} of $v$. Let $\Ga(v)$ denote the set of out-neighbors of a vertex $v\in V(G)$, and let $\Ga(V'):= \bigcup \{  \Ga(v) : v\in V'\}$ for any $V' \subseteq V$.
We use the notation $\Ga_1(V'):= V' \cup \Ga(V')$ and $\Ga_2(V'):=  V' \cup \Ga(V') \cup \Ga(\Ga(V'))$. The analogous notions for in-neighbors are denoted by $\Gamma^-(v), \Gamma^-(V'), \Gamma_1^-(V')$ and $\Gamma_2^-(V')$.

An independent subset of vertices $K \subseteq V(G)$ is called a \emph{kernel}  if $ \Ga_1(K)=V(G)$. Throughout the history of this subject, various objects have been defined and named in different ways, leading to a lack of complete consensus on the matter. While, the notion itself was introduced by von Neumann and Morgenstern~\cite{game}, as an independent subset $K\subseteq V(G)$ satisfying $\Gamma^-_1(K) = V(G)$, in this work we follow the definition which uses the inverse orientation, used e.g. by Bondy (\cite{Bondy} or Chv\'atal and Lov\'asz(\cite{CL74}).

An independent vertex set  $Q \subseteq V(G)$ is called a \emph{quasi-kernel}, or \abbr{Qk} for short, if $\Ga_2(Q)=V(G)$. Note that, Chv\'atal and Lov\'asz~\cite{CL74} use the term \emph{semi-kernel} for quasi-kernel.
As Landau showed already in 1955~\cite{landau}, every tournament has a quasi-kernel. Indeed, it is easy to see that any vertex of maximum out-degree has this property, see~\cite[Exercise {\S 8.6}]{lovasz}.
In 1962 Moon~\cite{Moon} proved, that if there is no \emph{source} in a tournament $G$, that is, each vertex has an in-neighbor, then the tournament contains at least three different \abbr{Qk}'s. Note that, Moon refers to such vertices as a \emph{king} of the tournament. Although there is a significant body of literature studying kings in tournaments, we will not provide an overview of it in this work.

A cyclically oriented odd cycle  demonstrates that  digraphs do not always have kernels. Even more, finding a kernel in a  digraph is proved to be an \textsc{NP}-hard problem by Chv\'atal \cite{CL73} (see also \cite{GJ}).
Given these facts, it may come as a surprise that this is not the case for quasi-kernels.
\begin{theorem}[Chv\'atal and Lov\'asz, 1974,~\cite{CL74}]\label{th:CL}
    Every digraph contains a quasi-kernel.
\end{theorem}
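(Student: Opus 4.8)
The plan is to induct on $n:=\abs{V(G)}$. The base case $n=0$ is vacuous, the empty set being a quasi-kernel of the empty digraph. For the inductive step I would use the move that is natural for domination-type problems: delete the closed out-neighbourhood of a vertex, apply the inductive hypothesis to the strictly smaller remainder, and then reinsert at most one vertex to cover what was removed.

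Concretely, fix any $v\in V(G)$ and put $B:=\Ga(v)$, so that $\Ga_1(\{v\})=\{v\}\cup B$. Let $H:=G-\bigl(\{v\}\cup B\bigr)$, the induced subdigraph on $V(G)\setminus\bigl(\{v\}\cup B\bigr)$. If $H$ is empty, then $V(G)=\{v\}\cup B=\Ga_1(\{v\})\subseteq\Ga_2(\{v\})$, and since $G$ is loopless $\{v\}$ is independent, so $\{v\}$ is itself a quasi-kernel and we are done. Otherwise $H$ has fewer than $n$ vertices, so by induction it has a quasi-kernel $Q$. Since $H$ is induced, $Q$ is independent in $G$ as well, and any directed path of length at most $2$ from $Q$ that witnesses the quasi-kernel property of $Q$ in $H$ survives in $G$; hence $V(H)\subseteq\Ga_2(Q)$, so the only vertices of $G$ that may lie outside $\Ga_2(Q)$ are $v$ and the members of $B$.

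Now I would split into two cases according to whether $v\in\Ga(Q)$. If some $q\in Q$ has an arc to $v$, then every $w\in B=\Ga(v)$ is reached along the directed path $q\to v\to w$, so $\{v\}\cup B\subseteq\Ga_2(Q)$ and thus $\Ga_2(Q)=V(G)$; that is, $Q$ is already a quasi-kernel of $G$. Otherwise no vertex of $Q$ has an arc to $v$, and I claim $Q':=Q\cup\{v\}$ works. It is independent: there is no arc from $Q$ to $v$ by the case hypothesis, and no arc from $v$ to $Q$ either, since every out-neighbour of $v$ lies in $B$, which is disjoint from $V(H)\supseteq Q$. And $\Ga_2(Q')\supseteq\Ga_2(Q)\cup\Ga_1(\{v\})\supseteq V(H)\cup\{v\}\cup B=V(G)$, so $Q'$ is a quasi-kernel of $G$. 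This closes the induction.

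The argument is short and almost every line is routine verification; the one spot where a careless attempt goes wrong --- the ``main obstacle'' --- is keeping the chosen set independent when $v$ is put back into $Q$. It is precisely to control this that one deletes the \emph{out}-neighbourhood of $v$ (so $v$ sends no arc into $H$) and then distinguishes whether $Q$ already reaches $v$ (so, in the case where $v$ is re-added, $v$ receives no arc from $Q$).
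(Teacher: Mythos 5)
Your induction is correct, and it is precisely the original Chv\'atal--Lov\'asz argument: delete $\Ga_1(\{v\})$, take a quasi-kernel $Q$ of the remainder by induction, and reinsert $v$ exactly when $Q$ sends no arc to $v$ (which is also the one delicate point, and you handle it correctly). The paper's two-phase greedy algorithm is this same recursion unrolled --- the first phase records the vertices $v$ chosen at successive levels, and the second phase, run in reverse order, performs your reinsertion decisions --- and the paper itself describes its algorithm as a paraphrase of the inductive proof, so the two approaches are essentially the same.
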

\noindent Shortly afterwards P.L.~Erd\H{o}s and L.A.~Sz\'ekely proposed the following conjecture:
\begin{conjecture}[Small Quasi-kernel conjecture~\cite{Fete}, 1976] \label{conj:SQK}
Every digraph $G$ with no sources contains a quasi-kernel with  at most $|V(G)|/2$ vertices.
\end{conjecture}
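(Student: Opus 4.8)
The plan is to attack \Cref{conj:SQK} — which, being a long-standing open problem, I do not expect to settle in full — by a minimal-counterexample argument grafted onto the greedy construction of a quasi-kernel guaranteed by \Cref{th:CL}. Suppose $G$ is a source-free digraph on $n$ vertices with no quasi-kernel of size at most $n/2$, chosen with $n$ minimum. First I would reduce to a well-behaved case: if $G$ is disconnected, or if it has a proper subset $A$ with no edges from $V(G)\setminus A$ into $A$ that is itself source-free, one would like to take a small quasi-kernel of $G[A]$, delete its second out-neighbourhood, and recurse on the remainder; the subtlety is that the remainder may acquire new sources, so this reduction only works when the interface is controlled. After such reductions I would assume $G$ is strongly connected with minimum in-degree at least $1$.

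The heart of the argument is a charging/swapping scheme. Fix any quasi-kernel $Q$, e.g.\ the output of the Chv\'atal--Lov\'asz procedure. Since $Q$ is independent, every $q\in Q$ has all of its in-neighbours in $V(G)\setminus Q$, and, as $G$ has no source, at least one; picking one gives a map $f\colon Q\to V(G)\setminus Q$, and if $f$ could be taken injective we would get $\abs{Q}\le n-\abs{Q}$, i.e.\ $\abs{Q}\le n/2$. The difficulty is that a single vertex $v\in V(G)\setminus Q$ may be the only in-neighbour of many vertices of $Q$; call such a $v$ \emph{heavy}. For a heavy $v$ the idea is a local exchange: remove from $Q$ a small set including every vertex dominated only by $v$, insert $v$ together with (if necessary) a bounded ``repair set'' so that $\Ga_2$ still reaches everything, and argue that the result is again a quasi-kernel that is not larger, and strictly smaller whenever $v$ dominated at least two vertices. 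Iterating and tracking a potential — say $\abs{Q}$ plus the number of heavy vertices, or the total excess in-degree received by $Q$ from $V(G)\setminus Q$ — should drive the process to a quasi-kernel with no heavy vertices, hence of size at most $n/2$, contradicting minimality.

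The step I expect to be the real obstacle, and the reason \Cref{conj:SQK} is still open, is that the local exchange need not be executable while preserving $\Ga_2(Q)=V(G)$ and independence: inserting $v$ clashes with independence through out- and in-neighbours of $v$ that still lie in $Q$, and a vertex formerly covered at distance $3$ from a removed $q$ may now be uncovered, so the ``repair set'' can itself be large and the potential need not decrease monotonically. Consequently a realistic target is to make the scheme go through under extra hypotheses — for instance when $G$ has bounded independence number, when $G$ is semi-complete, or when its underlying graph is chordal, where heavy vertices and their neighbourhoods are rigid enough to control — or to settle for a weakened bound $\abs{Q}\le cn$ with $c<1$ by performing only those exchanges that are provably safe, thereby recovering (and perhaps improving) the known partial results rather than the full conjecture.
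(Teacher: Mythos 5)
There is nothing in the paper to compare your attempt against: \Cref{conj:SQK} is stated as an open conjecture, and the paper explicitly records that it has not even been shown that some constant $c<1$ exists with every source-free digraph admitting a quasi-kernel of size at most $c\abs{V(G)}$ (the only genuine general upper bound cited is Spiro's $n-\sqrt{n}$). So your proposal must be judged as a proof attempt on its own, and it does not close. The gap is exactly where you place it, but it is worth being precise about why it is fatal rather than merely technical. Your counting step needs an injective map $f\colon Q\to V(G)\setminus Q$ sending each $q$ to an in-neighbour; when a vertex $v$ outside $Q$ is the unique in-neighbour of many vertices of $Q$, the exchange ``delete those vertices, insert $v$'' destroys both independence (any out- or in-neighbour of $v$ remaining in $Q$ must also be evicted, and those evictions cascade) and coverage (every vertex that was reached only at distance $\le 2$ from an evicted $q$ must be re-covered, and the repair set needed for that can be as large as what you removed). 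There is no potential function given that provably decreases, and Example~\ref{th:tight} in the paper shows the target cannot be beaten: there are strongly connected source-free digraphs in which \emph{every} quasi-kernel has roughly $n/2$ vertices, so any exchange scheme must be allowed to terminate at size $\approx n/2$ exactly, which rules out arguments with slack.

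That said, the sound kernel of your idea is already in the literature and in this paper. The injective-map counting does work under the extra hypothesis that every vertex of $Q$ has an in-neighbour inside $\Ga(Q)$ --- this is precisely the notion of a \emph{good} quasi-kernel, and Theorem~\ref{th:good} converts it into a small quasi-kernel by greedily choosing $Q'\subseteq Q$ with $\Ga(Q')=\Ga(Q)$ and $\abs{Q'}\le\abs{\Ga(Q)}$; here no exchange is needed because the in-neighbours certifying the bound are themselves dominated by $Q'$. Similarly, Theorem~\ref{th:main} handles the case where $V(G)\setminus\Ga_1(Q)$ has a kernel by comparing two explicitly constructed quasi-kernels rather than by iterated local repair. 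If you want to salvage your approach, the realistic route is to identify hypotheses under which every quasi-kernel can be modified into a good one; as stated, your scheme proves nothing beyond what these known partial results already give.
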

\noindent We remark that the conjecture can not be strengthened, as a cyclically oriented $C_4$ shows. Furthermore, Example \ref{th:tight} will provide large, strongly connected, source-less digraphs where every \abbr{Qk} contains roughly $|V(G)|/2$ vertices. We shall use the term \emph{small quasi-kernel} referring to a \abbr{Qk} of order at most $|V(G)|/2$. So the conjecture states that every digraph with no sources  contains a small \abbr{Qk}.

The conjecture was initially not published in any paper and was communicated by word of mouth until it finally appeared in 2010~\cite{Fete}. After a long hibernation state, the conjecture was revitalised by Kostochka, Luo, and Shan in 2020 (see \cite{Kost}). In the last three years, a lot of effort was spent on proving the small \abbr{Qk} conjecture. However, the conjecture is still wide open. For example, it has not even been proved that there exists some constant $c<1$ for which every digraph $G$ with no sources contains a \abbr{Qk} with at most $c|V(G)|$ vertices.

In this paper we we will survey our knowledge on quasi-kernels. It also will provide some new results and proofs.

\section{The existence of quasi-kernel(s)}\label{sec:exist}
Let us start with a proof of the Chv\'atal-Lov\'asz theorem. The original induction proof can be paraphrased as a fast two-phase greedy algorithm, which we shall call the \emph{Chv\'atal-Lov\'asz-algorithm} (or \emph{CL-algorithm} for short). We present this version of the proof because we will rely on it later on.

\begin{proof}[Proof of Theorem \ref{th:CL}]
Let $v_1,\ldots,v_n$ be an arbitrary order on the vertices of $G$. In the first phase, let $i_1=1$ and for $k\geq 2$ let $i_k$ be the minimum index in
\begin{equation}\label{eq:CLremaining}
V(G)\setminus \Gamma_1^+(\{v_{i_1},\ldots,v_{i_{k-1}}\});
\end{equation}
repeat this until the above set~\eqref{eq:CLremaining}  becomes empty. Let $t$ be the maximum for which $i_k$ is defined, and we call ${(v_{i_k})}_{k=1}^t$ the vertices selected in the first phase. For the second phase, let $G':= G[v_{i_1},\ldots,v_{i_{t}}]$ and repeat the previous selection procedure in $G'$ with the following order of the vertices  $v_{i_t},v_{i_{t-1}}, \dots,v_{i_1}$. It is easy to see that the selected vertex subset of $G'$ forms a quasi-kernel in $G$.
\end{proof}
\noindent The proof clearly shows that if the digraph contains a vertex of in-degree $0$ (that is, a source), then this vertex must belong to every \abbr{Qk}. It is interesting to point out that the presented proof is essentially the only one we know. The proof of Bondy~\cite{Bondy} is a version of the same argument. In his paper, Bondy also described a successful  argument of S. Thomass\'e. Croitoru (in \cite{CC15}) gave a detailed description of that proof. The latter author also pointed out, that the Thomass\'e algorithm cannot always construct every \abbr{Qk}. As we will show in Example \ref{th:nonCL}, the Chv\'atal-Lov\'asz algorithm also has the same ``shortcoming''.

While finding a \abbr{Qk} in a digraph is computationally not hard,  deciding whether there exists a \abbr{Qk} containing a fixed vertex is \textsc{NP}-hard. It was proved by C. Croitoru using a polynomial time reduction from the CNF satisfiability problem \textbf{SAT} (see~\cite{CC15}).

How many (different) \abbr{Qk} can be in a digraph? This question was raised at first by Jacob and Meyniel (\cite{JM96}).
\begin{theorem}[Jacob and Meyniel, 1996]
Every digraph without a kernel has at least three \abbr{Qk}s. For each  integer $n\ge 3$, there exists a graph without a kernel and with exactly three \abbr{Qk}s.
\end{theorem}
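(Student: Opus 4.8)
I would prove the two halves of the statement separately.

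\emph{The extremal digraphs.} For $n\ge 3$ let $G_n$ be obtained from a cyclically oriented triangle on $\{c_1,c_2,c_3\}$ (edges $c_1c_2$, $c_2c_3$, $c_3c_1$) by adding vertices $u_1,\dots,u_{n-3}$, making each $u_j$ an out-neighbour of every $c_i$, and adding no further edges; in particular each $u_j$ has out-degree $0$. Since any two of $c_1,c_2,c_3$ are adjacent and each $c_i$ is adjacent to every $u_j$, the only nonempty independent sets are the singletons $\{c_i\}$ and the subsets of $\{u_1,\dots,u_{n-3}\}$. A nonempty subset of $\{u_1,\dots,u_{n-3}\}$ has out-degree $0$, so its second out-neighbourhood equals itself and it is neither a kernel nor a quasi-kernel. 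The set $\Gamma_1^+(\{c_i\})$ omits the triangle-vertex two steps ahead of $c_i$, so no $\{c_i\}$ is a kernel, and hence $G_n$ has no kernel. On the other hand $\Gamma_2^+(\{c_i\})$ contains $c_i$, both of its triangle-successors, and every $u_j$ at distance $1$, so it equals $V(G_n)$; thus $\{c_1\},\{c_2\},\{c_3\}$ are quasi-kernels, and as the only other nonempty independent sets are subsets of $\{u_1,\dots,u_{n-3}\}$ (which are not quasi-kernels), these three are the only quasi-kernels. For $n=3$ this digraph is just the cyclically oriented $C_3$.

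\emph{At least three quasi-kernels.} The plan rests on one tool extracted from the proof of Theorem~\ref{th:CL}, a \emph{rotation}: given a quasi-kernel $Q$ and a vertex $v\notin\Gamma_1^+(Q)$, run the CL-algorithm on $G$ with a vertex order beginning at $v$. The algorithm always selects its first vertex, so $v$ is chosen in the first phase; the second phase reverses the order, so $v$ is examined last, and in the resulting quasi-kernel $Q'$ we have either $v\in Q'$ or $v$ was rejected because some vertex of $Q'$ points to it. In both cases $v\in\Gamma_1^+(Q')$, so $Q'\ne Q$. Now take a quasi-kernel $Q_1$ of $G$ (it exists by Theorem~\ref{th:CL}); as $G$ has no kernel, $Q_1$ is not a kernel, so $U_1:=V(G)\setminus\Gamma_1^+(Q_1)\ne\emptyset$, and a rotation of $Q_1$ at any vertex of $U_1$ gives a quasi-kernel $Q_2\ne Q_1$. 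If $G$ has a third quasi-kernel we are done, so suppose $Q_1,Q_2$ are the only two. Then every rotation of $Q_1$ at a vertex $v\in U_1$ must output $Q_2$, so $v\in\Gamma_1^+(Q_2)$; hence $U_1\subseteq\Gamma_1^+(Q_2)$, that is, $\Gamma_1^+(Q_1)\cup\Gamma_1^+(Q_2)=V(G)$. Symmetrically, since $Q_2$ is not a kernel either, $U_2:=V(G)\setminus\Gamma_1^+(Q_2)$ is a nonempty subset of $\Gamma_1^+(Q_1)$, so $U_1\cap U_2=\emptyset$.

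\emph{The main obstacle.} It remains to derive a contradiction from this configuration — two quasi-kernels $Q_1,Q_2$, neither a kernel, with $\Gamma_1^+(Q_1)\cup\Gamma_1^+(Q_2)=V(G)$ and $U_1\cap U_2=\emptyset$ — and this is the heart of the matter, because a further rotation is useless (no vertex is left undominated by both $Q_1$ and $Q_2$). I would replace the weak hypothesis ``$Q_1$ is not a kernel'' by a genuine exchange. Choose $v\in U_1$ that is a source of the induced subdigraph $G[U_1]$; then every in-neighbour of $v$ lies in $\Gamma^+(Q_1)\setminus Q_1$ (an in-neighbour $u$ of $v$ is not in $U_1$, since $v$ is a source of $G[U_1]$, and is not in $Q_1$, since otherwise $v\in\Gamma^+(Q_1)$, contradicting $v\in U_1$). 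Deleting the in-neighbours of $v$ makes $v$ a source, hence a member of every quasi-kernel of the reduced digraph; the aim is to show such a quasi-kernel lifts to a quasi-kernel $Q_3$ of $G$ with $v\in Q_3$, using that each deleted in-neighbour $u$ has some $q_u\in Q_1$ with $q_u\to u$, where $q_u$ survives into the reduced digraph and is dominated there. Then $Q_3\ne Q_1$ since $v\notin Q_1$, and one argues $Q_3\ne Q_2$ as well, contradicting that $Q_1,Q_2$ exhaust the quasi-kernels of $G$. The delicate point — which I expect to cost the most effort — is this lifting step: guaranteeing that the deleted in-neighbours of $v$ remain within out-distance $2$ of $Q_3$ in $G$ (a careless construction can push them to distance $3$), which is precisely why $v$ should be taken to be a source of $G[U_1]$, and may in addition require choosing $Q_1$, or the quasi-kernel of the reduced digraph, to be minimal.
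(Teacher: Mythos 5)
The paper only cites this theorem of Jacob and Meyniel and gives no proof of its own, so there is nothing to compare against; judging your proposal on its merits, it is incomplete. The construction for the second half is correct and complete: in your $G_n$ the only nonempty independent sets are the three singletons $\{c_i\}$ and the subsets of $\{u_1,\dots,u_{n-3}\}$, the latter have empty out-neighbourhood and so are never quasi-kernels, each $\{c_i\}$ is a quasi-kernel but not a kernel, and hence $G_n$ has no kernel and exactly three quasi-kernels. The rotation tool is also sound: running the CL-algorithm with $v$ first guarantees $v\in\Gamma_1^+(Q')$ for the output $Q'$, which cleanly yields a second quasi-kernel and the reduction to the configuration $\Gamma_1^+(Q_1)\cup\Gamma_1^+(Q_2)=V(G)$, $U_1\cap U_2=\emptyset$.

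The genuine gap is exactly where you flag it: the third quasi-kernel is never produced. Two specific points fail in the sketched exchange. First, the lifting step is not just delicate but unestablished: a quasi-kernel $Q_3$ of the reduced digraph $G-\Gamma^-(v)$ only guarantees that each surviving vertex, in particular each $q_u$ with $q_u\to u$ for a deleted in-neighbour $u$ of $v$, lies in $\Gamma_2^+(Q_3)$ of the reduced digraph; that places $u$ at out-distance up to $3$ from $Q_3$ in $G$, and neither taking $v$ to be a source of $G[U_1]$ nor minimality of $Q_1$ obviously repairs this (you would need $q_u\in\Gamma_1^+(Q_3)$ or $u$ itself to be dominated for some other reason). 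Second, even granting the lift, the claim $Q_3\ne Q_2$ is asserted without argument: you know $v\in\Gamma_1^+(Q_2)$, which is entirely consistent with $v\in Q_2$, and then $v\in Q_3$ does not separate $Q_3$ from $Q_2$. As it stands the proposal proves ``at least two quasi-kernels'' and the sharpness examples, but not the statement. To finish you would need either a correct completion of this exchange (e.g., an invariant distinguishing the lifted quasi-kernel from both $Q_1$ and $Q_2$) or a different mechanism for the third quasi-kernel, such as iterating the rotation inside the subdigraph induced by $\Gamma_1^+(Q_1)\setminus\Gamma_1^+(Q_2)$ or following Jacob and Meyniel's original argument.
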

\noindent Gutin and his colleagues characterized digraphs with exactly one \abbr{Qk}:
\begin{theorem}[Gutin, Koh, Tay, and Yeo, 2004, \cite{gutin}]
A digraph has exactly one \abbr{Qk} if and only if the set of sources constitutes a kernel.
\end{theorem}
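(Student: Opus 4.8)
The plan is to prove both directions. For the easy direction, suppose the set $S$ of sources forms a kernel $K=S$. Since every source must belong to every quasi-kernel (as noted after the proof of Theorem~\ref{th:CL}, a source has in-degree $0$, so it lies outside $\Gamma_2^+(Q)$ unless it lies in $Q$ itself), any quasi-kernel $Q$ satisfies $S\subseteq Q$. But $S=K$ is already independent and dominating in the kernel sense, so $\Gamma_1^+(S)=V(G)\supseteq \Gamma_2^+(S)$; moreover $Q\supseteq S$ together with independence of $Q$ forces $Q=S$, because any vertex $w\in Q\setminus S$ would have an in-neighbor, and that in-neighbor is dominated (in one step) by $S\subseteq Q$, contradicting independence of $Q$ — more directly, $S$ being a kernel means $V(G)\setminus S=\Gamma^+(S)$, and $Q$ independent with $S\subseteq Q$ cannot contain any vertex of $\Gamma^+(S)$. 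Hence $Q=S$ is the unique quasi-kernel.

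For the harder direction, assume $G$ has exactly one quasi-kernel $Q_0$; we must show $S:=\{\text{sources}\}$ is a kernel and equals $Q_0$. First, $S\subseteq Q_0$ as above. The strategy is to run the Chv\'atal--Lov\'asz algorithm and exploit the freedom in the choice of vertex ordering: if at any point during the algorithm (in either phase) there were a genuine choice producing a different quasi-kernel, we would contradict uniqueness. I would argue that uniqueness forces the first-phase selection to be forced at every step, and similarly the second phase; then analyze what structure this rigidity imposes. Concretely, suppose some vertex $v\in V(G)\setminus S$ is not in $Q_0$. Since $v$ is not a source it has an in-neighbor; I want to build a second quasi-kernel that either contains $v$ or omits some vertex of $Q_0\setminus S$. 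A clean way: start the CL-algorithm with $v_1=v$ — the algorithm always outputs a quasi-kernel, and its output always contains $v_1$ when $v_1$ is not dominated by earlier picks (which it trivially is not, being first). If $v\notin Q_0$, this run produces a quasi-kernel containing $v$, hence different from $Q_0$, contradiction. Therefore every non-source lies in $Q_0$ — wait, that would make $Q_0=V(G)$, impossible unless $G$ has no edges. So the correct reading is subtler: the CL-algorithm with $v_1=v$ need not keep $v$ in the final output, since the second phase may discard it. The real argument must track this.

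So I would instead reason as follows. Uniqueness of $Q_0$ implies, via the CL-algorithm's flexibility, that $Q_0$ must be reachable from every starting order, and conversely no order may produce anything else. Pick any $v\notin Q_0$. Because $Q_0$ is a quasi-kernel, $v\in\Gamma_2^+(Q_0)$. Suppose, for contradiction, $v$ has an in-neighbor $u$ that is \emph{also} not in $Q_0$ (i.e. $v$ is not dominated by $Q_0$ in one step via a $Q_0$-vertex, or more to the point, consider whether $v$'s in-neighbourhood meets $Q_0$). The key dichotomy is: either every non-source has an in-neighbor in... no. Let me state the target structural fact: $S$ is a kernel iff $S$ is independent (automatic, sources have no in-edges so certainly no edges among them) and $\Gamma^+(S)=V(G)\setminus S$, i.e. every non-source is an out-neighbor of some source. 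So I must show: \textbf{uniqueness $\Rightarrow$ every non-source has an in-neighbor that is a source.} Equivalently, if some non-source $w$ has \emph{no} source among its in-neighbors, I construct a second quasi-kernel. The idea: such a $w$ can be "added" to the construction independently of $S$. Run the CL-algorithm ordering the vertices so that all of $S$ comes first (forced into the quasi-kernel), then $w$ next: since $w$'s in-neighbors are all non-sources, $w\notin\Gamma_1^+(S)$, so the algorithm is free to select $w$ in the first phase. Carrying this through the second phase (and checking $w$ survives, using that nothing in $S$ dominates $w$), one obtains a quasi-kernel containing $w$. If $w\notin Q_0$ this already contradicts uniqueness; if $w\in Q_0$, a symmetric argument omitting $w$ — order $w$ last and route domination of $w$ through a different vertex — yields a quasi-kernel without $w$, again a contradiction. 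The main obstacle I anticipate is precisely this last step: verifying rigorously that the CL-algorithm, with a carefully rigged vertex order, can be made to either include or exclude a prescribed vertex, i.e. pinning down exactly how much control the ordering gives over the output. This requires a careful induction mirroring the proof of Theorem~\ref{th:CL}, and handling the two-phase structure so that a first-phase choice is not silently undone in the second phase.
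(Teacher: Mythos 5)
The paper does not actually prove this theorem --- it is quoted from \cite{gutin}, with a remark that Croitoru \cite{CC15} gave an alternative proof via the Thomass\'e algorithm --- so there is no in-paper proof to compare against; I can only assess your proposal on its own terms. Your ``if'' direction is correct and complete: sources lie in every quasi-kernel, $S$ is automatically independent, and if $S$ is a kernel then $V(G)\setminus S=\Gamma^+(S)$ forces any independent $Q\supseteq S$ to equal $S$. (The first attempted justification, via ``that in-neighbor is dominated\dots contradicting independence,'' is garbled, but the ``more directly'' version that follows is the right argument.)

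The ``only if'' direction, however, is not a proof but a plan with a hole that you yourself identify and never close. The crux is the claim that, given a non-source $w\notin\Gamma_1^+(S)$, you can rig the vertex ordering so that the Chv\'atal--Lov\'asz algorithm outputs a quasi-kernel containing $w$, and (if $w\in Q_0$) another ordering whose output omits $w$. Neither is established. Your own first attempt ($v_1=v$) fails because the second phase may discard the first pick; your repair (put $S$ first, then $w$) fails for two further reasons: $w$ may already lie in $\Gamma_1^+$ of other first-phase selections made before it is reached, so it need not be selected at all, and even if selected early in phase one it is processed \emph{late} in phase two and can again be discarded. Quantifying ``how much control the ordering gives over the output'' is precisely the content of the hard direction, so the proof is missing its essential step. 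A concrete suggestion for a cleaner route: the Jacob--Meyniel theorem stated just before this one in the paper says a digraph with no kernel has at least three quasi-kernels; hence exactly one quasi-kernel forces $G$ to have a kernel $K$, and since every kernel is a quasi-kernel, $K=Q_0\supseteq S$. The remaining task --- showing that a kernel containing a non-source vertex admits a second, distinct quasi-kernel --- is a local swapping argument around that vertex and its in-neighbor, and avoids any analysis of the CL-algorithm's two-phase dynamics.
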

\noindent Coitoru provided a different proof in \cite{CC15} for this result, based on the Thomass\'e algorithm. Furthermore, it was also proved that there are digraphs, where this algorithm necessarily misses some \abbr{Qk}s.

Gutin, Koh, Tay, and Yeo also characterized source-less digraphs with exactly two \abbr{Qk}s. Coitoru extended this question to all digraphs, and came up with the following result:
\begin{theorem}[Coitoru, \cite{CC15}]
If a digraph has exactly two \abbr{Qk}s, then at least one of them is a kernel and their intersection consists of all sources.
\end{theorem}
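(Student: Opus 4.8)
The plan is to lean on two facts already available: the theorem of Jacob and Meyniel, and the observation (visible in the proof of Theorem~\ref{th:CL}) that the Chv\'atal--Lov\'asz algorithm may be started from any prescribed vertex. For the first assertion, note first that every kernel $K$ is a quasi-kernel, since $\Ga_1(K)=V(G)$ trivially forces $\Ga_2(K)=V(G)$. Hence, by the Jacob--Meyniel theorem, a digraph with only two \abbr{Qk}s cannot be kernel-free, and any kernel of it is one of its two \abbr{Qk}s. Accordingly, write the two \abbr{Qk}s as $K$ (a kernel) and $Q$; this already gives the first half of the statement.

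It then remains to show that $K\cap Q$ is exactly the set of sources. One inclusion is immediate: as noted right after the proof of Theorem~\ref{th:CL}, every source belongs to every \abbr{Qk}, so every source lies in $K\cap Q$. For the reverse inclusion I would argue by contradiction. Suppose some $v\in K\cap Q$ is not a source, and fix an in-neighbour $u$ of $v$. Run the CL-algorithm on $G$ using a vertex order whose first element is $u$, so that in the notation of the proof of Theorem~\ref{th:CL} the index $i_1$ is that of $u$; let $Q'$ be the \abbr{Qk} it produces. Since $v$ is an out-neighbour of $u$, we have $v\in\Ga_1(\{u\})$, so for every $k\ge 2$ the residual set~\eqref{eq:CLremaining} avoids $v$; consequently $v$ is chosen in neither phase, and $v\notin Q'$. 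But $v\in K$ and $v\in Q$ while $v\notin Q'$, so $Q'$ is a \abbr{Qk} different from both $K$ and $Q$ --- a third \abbr{Qk}, contradicting the hypothesis. Hence every vertex of $K\cap Q$ is a source, and together with the previous inclusion this shows that $K\cap Q$ is precisely the set of sources.

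The argument is short once one spots the right move, so I do not expect a serious obstacle; the single delicate point is the reason $Q'$ must differ from \emph{both} $K$ and $Q$. This is exactly why the vertex $v$ is taken in the intersection $K\cap Q$: if $v$ lay in only one of the two \abbr{Qk}s, the construction would only certify that $Q'$ omits $v$, and $Q'$ could then coincide with the other \abbr{Qk}. Everything else is a direct appeal to Theorem~\ref{th:CL}, its proof, and the Jacob--Meyniel theorem.
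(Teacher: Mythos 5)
The paper itself gives no proof of this statement; it is quoted from Croitoru \cite{CC15}, whose argument is reportedly built on the Thomass\'e algorithm. Your proof is correct and is a clean, self-contained alternative using only tools already present in the survey. The first half (one of the two \abbr{Qk}s is a kernel) follows exactly as you say: a kernel $K$ satisfies $\Ga_1(K)=V(G)$, hence $\Ga_2(K)=V(G)$, so it is a \abbr{Qk}, and the Jacob--Meyniel theorem forces a kernel to exist when there are only two \abbr{Qk}s. For the second half, the inclusion ``sources $\subseteq K\cap Q$'' is the remark following the proof of Theorem~\ref{th:CL}, and the reverse inclusion via seeding the CL-order with an in-neighbour $u$ of a non-source $v\in K\cap Q$ works: since loops are excluded, $u\neq v$, so $v\in\Ga_1(\{u\})$ removes $v$ from the residual set~\eqref{eq:CLremaining} for all $k\ge 2$; the second phase only selects from the first-phase vertices, so the output $Q'$ omits $v$ and therefore differs from both $K$ and $Q$, contradicting the count of exactly two \abbr{Qk}s. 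You correctly identify the one delicate point, namely that $v$ must be taken in the intersection for $Q'$ to be a genuinely third \abbr{Qk}. What your route buys is constructiveness and independence from the Thomass\'e machinery: everything reduces to the freedom of the vertex ordering in the CL-algorithm plus one black-box counting theorem.
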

\noindent Gutin and his colleagues conjectured in \cite{Gconj} that if a digraph is source-less, then there always exist at least two disjoint \abbr{Qk}s. However, in \cite{gutin} they refuted this statement, providing a counterexample on 14 vertices.

Langlois, Meunier, Rizzi, and Vialette recognized that the situation was even worse, and they proved the following result:
\begin{theorem}[Langlois, Meunier, Rizzi and Vialette \cite{LMRV21}, 2021]\label{th:Langlois}\hfill \
\begin{enumerate}[{\rm (i)}]
\item It is computationally hard to decide whether a digraph contains two disjoint \abbr{Qk}s.
\item Computing the smallest \abbr{Qk} in a digraph is also computationally hard.
\end{enumerate}
\end{theorem}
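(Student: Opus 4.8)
Both decision problems are clearly in \textsc{NP} --- one guesses the quasi-kernel(s) and checks independence, $\Ga_2$-domination, and (for~(i)) disjointness in polynomial time --- so the statement amounts to \textsc{NP}-completeness, and the plan is to prove \textsc{NP}-hardness by polynomial-time reductions from \textbf{SAT}, in the spirit of Croitoru's reduction for the ``quasi-kernel through a prescribed vertex'' problem. Given a CNF formula $\varphi$ on variables $z_1,\dots,z_n$ with clauses $c_1,\dots,c_m$, I would build a digraph $G_\varphi$ as follows: for each variable $z_i$ take a directed $2$-cycle on new vertices $z_i^{+},z_i^{-}$ together with two pendant ``literal vertices'' $p_i^{+},p_i^{-}$ and arcs $z_i^{+}\to p_i^{+}$, $z_i^{-}\to p_i^{-}$; for each clause $c_j$ take a new vertex $c_j$ with an arc $p_i^{+}\to c_j$ whenever $z_i$ occurs positively in $c_j$ and $p_i^{-}\to c_j$ whenever it occurs negatively. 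Every vertex of $G_\varphi$ lies on a $2$-cycle or receives an arc from one, so $G_\varphi$ has no source.

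The two facts driving the reduction are: (a) the $2$-cycle $\{z_i^{+},z_i^{-}\}$ receives no arc from outside its gadget, so every quasi-kernel of $G_\varphi$ contains at least one of $z_i^{+},z_i^{-}$, and, these $2$-cycles being vertex-disjoint, every quasi-kernel has at least $n$ vertices; (b) choosing exactly one endpoint of each variable $2$-cycle (and nothing else) already puts every $z$- and $p$-vertex into $\Ga_2$ of the chosen set, and puts the clause vertex $c_j$ into $\Ga_2$ precisely when the corresponding assignment $a$ (recording which endpoint was chosen) satisfies $c_j$. Hence a quasi-kernel of size $\le n$ exists if and only if $\varphi$ is satisfiable, which proves~(ii): deciding whether a quasi-kernel of size at most a prescribed bound exists is \textsc{NP}-hard, and therefore so is computing a minimum quasi-kernel.

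For~(i) I would use the same style of gadget, now so that two disjoint quasi-kernels $Q_1,Q_2$ are forced to encode a pair of truth assignments witnessing satisfiability: by fact~(a), disjointness forces $Q_1$ and $Q_2$ to take opposite endpoints of every variable $2$-cycle, so they spell out an assignment $a$ and (essentially) its complement. To make this a correct reduction one can start from a variant of \textbf{SAT} that is symmetric under complementing all variables --- e.g. Not-All-Equal $3$-SAT, equivalently $3$-uniform hypergraph $2$-colourability --- so that $a$ is a model iff its complement is, and one can reinforce each gadget with a few auxiliary $2$-cycles so that the two quasi-kernels' roles are pinned down. (A possible alternative is a reduction from Chv\'atal's \textsc{NP}-hard kernel-existence problem, turning a source-free digraph $G$ into a source-free $G'$ that has two disjoint quasi-kernels exactly when $G$ has a kernel.)

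I expect the main obstacle, in both parts, to be not the construction of the intended quasi-kernel(s) but the proof that there are no others --- an exhaustive description of the quasi-kernels of $G_\varphi$. Because membership in $\Ga_2(Q)$ is so cheaply obtained, a naive construction admits ``cheating'' quasi-kernels: ones that reach a clause vertex through an unintended walk of length two, or that place a pendant or a clause vertex into $Q$ and thereby ``satisfy'' a clause that the encoded assignment leaves false. Excluding these --- for~(ii) via the size budget, for~(i) via the auxiliary $2$-cycles and the symmetry of the source problem --- is where essentially all the work lies; verifying that $G_\varphi$ has no source and that the reduction runs in polynomial time is then routine.
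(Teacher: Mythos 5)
The paper does not prove this theorem --- it is quoted as a cited result from \cite{LMRV21} --- so there is no in-paper argument to compare against; I can only assess your reductions on their own merits. Your reduction for part~(ii) is essentially complete and correct: in your $G_\varphi$ the only in-neighbor of $z_i^{+}$ is $z_i^{-}$ and vice versa, so indeed every \abbr{Qk} meets each variable $2$-cycle, hence has size at least $n$; a budget of exactly $n$ then forces the \abbr{Qk} to consist of exactly one endpoint per $2$-cycle and nothing else, and a clause vertex $c_j$ lies in $\Ga_2(Q)$ precisely when the encoded assignment satisfies $c_j$. This cleanly shows that deciding ``is there a \abbr{Qk} of size at most $n$'' is \textsc{NP}-hard, which suffices for~(ii).

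Part~(i), however, has a genuine gap that your own closing paragraph correctly anticipates but does not close. In the gadget as described, $G_\varphi$ has two disjoint quasi-kernels \emph{regardless} of satisfiability: take $Q_1=\{z_i^{+}:i\}\cup\{c_j:j\}$ (independent, since clause vertices receive arcs only from pendant vertices, and a \abbr{Qk} because the clause vertices cover themselves) and $Q_2=\{z_i^{-}:i\}\cup\{p_i^{+}:i\}$ (every $c_j$ is reached in one step from some $p_i^{+}$ or in two steps via $z_i^{-}\to p_i^{-}\to c_j$). So disjointness alone does not force the two quasi-kernels to encode complementary assignments; the entire burden of the reduction falls on the unspecified ``auxiliary $2$-cycles'' that are supposed to forbid pendant and clause vertices from entering a \abbr{Qk}, and that is exactly the part you leave open. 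Note also that attaching a private $2$-cycle to a clause vertex to pin it down is self-defeating: such a $2$-cycle would itself demand a vertex in every \abbr{Qk} and would place $c_j$ in $\Ga_2$ of that vertex for free, decoupling coverage of $c_j$ from the assignment. Until a concrete gadget is given together with a proof that \emph{every} pair of disjoint quasi-kernels decodes to a valid (NAE-)satisfying assignment, part~(i) remains a plausible plan rather than a proof.
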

\noindent To finish this section we remark that Heard and Huang provided several different extra conditions for source-less digraphs to ensure that they contain at least two vertex disjoint \abbr{Qk}s (see \cite{Heard08}). These are the semicomplete multipartite digraphs (including tournaments), the quasi-transitive digraphs, and finally the locally semicomplete digraphs.

\section{The Small Quasi-kernel conjecture}\label{sec:quasikernel}
\subsection{The Chv\'atal-Lov\'asz algorithm}

One might assume that, given a directed graph without sources, there exists a vertex ordering that leads to a small \abbr{Qk} produced by the CL-algorithm. However, as shown by the following example, this assumption is incorrect. Of course, this also means that not all \abbr{Qk} can be found by the CL-algorithm. In contrast, Lemma~\ref{th:observ} demonstrates a specific case in which a slightly modified CL-algorithm does yield a small \abbr{Qk}.

\begin{example} \label{th:nonCL}
    Define our directed graph as follows:
    \begin{gather*}
        V(G)=\{v,u,w,v_i,u_i,w_i:i\in[k]\},\\
        E(G)=\{{vu},{uv},{vw},{wv},{uw},{wu}\}\cup \{{uu_i},{u_i u},{vv_i},{v_i v},{ww_i},{w_i w}:i\in[k]\}.
    \end{gather*}
\end{example}

\paragraph{Analysis of the example.} It is easy to see that any vertex from the set $\{u,v,w\}$ alone is a quasi-kernel. However, there exists no ``right'' order for the CL-algorithm that would  produce a quasi-kernel of order at most $\frac{3k+3}{2}$.

Indeed, if the first pick of the CL-algorithm is  a vertex from $\{v,u,w\}$, say $v$, then the algorithm will collect all vertices  $\{u_i,w_i:i\in[k]\}$ in some order and terminate. So the algorithm will output the quasi-kernel $\{v\}\cup \{u_i,w_i:i\in[k]\}$ of size $2k+1$.

If the first vertex chosen by the CL-algorithm is not a vertex from  $\{v,u,w\}$, say $v_k$, then after it has been picked we obtain $k-1$ isolated vertices $\{v_i:i\in[k-1]\}$, as their only neighbor has just been removed. Thus, all vertices $\{v_i:i\in[k]\}$ will be included in the quasi-kernel.
Now we consider the rest of the graph: if the algorithm first picks a vertex $u$ or $w$, say $u$, then the quasi-kernel will have to contain all vertices from $\{w_i:i\in[k]\}$, resulting in a \abbr{Qk} of size $2k$. Otherwise, if the algorithm first picks a leaf vertex, say $u_k$, then we will need to select all of $\{u_i:i\in[k]\}$ for the \abbr{Qk}, again resulting in a large quasi-kernel of size at least $2k$. \qed{}

\medskip

\noindent That said, in some situations, a version of the CL-algorithm does result in a small \abbr{Qk}.
\begin{lemma} \label{th:observ}
Assume that digraph $G$ does not contain a source and it satisfies the following property: If $i < j$ and $v_jv_i \in E$ then $v_iv_j\in E$ holds as well.  Then there is a small \abbr{Qk} in $G$.
\end{lemma}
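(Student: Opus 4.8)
The plan is to run the Chv\'atal--Lov\'asz algorithm on $G$, observe that under the stated hypothesis its first phase already delivers a \emph{kernel}, and --- if that kernel happens to be larger than $n/2$ --- invoke the same observation once more on the complement. Concretely: I would run the first phase of the CL-algorithm with the order $v_1,\dots,v_n$ and let $K=\{v_{i_1},\dots,v_{i_t}\}$ be the set it selects. The indices are strictly increasing ($i_1<\dots<i_t$), since the first phase always takes the smallest available index; and by construction there is no edge $v_{i_a}v_{i_b}$ with $a<b$, since the out-neighbours of $v_{i_a}$ are deleted as soon as $v_{i_a}$ is picked. The decisive point is that there is also no edge $v_{i_b}v_{i_a}$ with $a<b$: here $i_a<i_b$, so the hypothesis would force $v_{i_a}v_{i_b}\in E$, which we have just excluded. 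Hence $K$ is independent, and since $\Gamma_1^+(K)=V(G)$ always holds for the first-phase output, $K$ is a kernel of $G$. (Source-freeness is not used in this step; this matters, because the induced order makes every induced subdigraph of $G$ again satisfy the hypothesis, so the same argument yields a kernel in any induced subdigraph of $G$.)

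If $\abs{K}\le n/2$ we are finished, a kernel being in particular a quasi-kernel. So assume $\abs{K}>n/2$, put $R:=V(G)\setminus K$, and note $\abs{R}<n/2$ and $R\neq\emptyset$ (otherwise $G$ is edgeless and every vertex is a source). By the previous step applied to $G[R]$ with the induced order, $G[R]$ has a kernel $P$; then $\abs{P}\le\abs{R}<n/2$ and $P$ is independent in $G$. I claim $P$ is a quasi-kernel of $G$. Since $P$ is a kernel of $G[R]$ we have $R\subseteq\Gamma_1^+(P)$. Since $G$ is source-free and $K$ is independent, every vertex of $K$ has an in-neighbour, which must lie outside $K$; that is, $K\subseteq\Gamma^+(R)$. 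Combining,
\[
V(G)=R\cup K\;\subseteq\;R\cup\Gamma^+(R)\;\subseteq\;\Gamma_1^+(P)\cup\Gamma^+\!\bigl(\Gamma_1^+(P)\bigr)\;\subseteq\;\Gamma_2^+(P),
\]
so $\Gamma_2^+(P)=V(G)$ and $P$ is a small quasi-kernel, completing the proof.

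The ``slightly modified CL-algorithm'' is thus simply: run it, and if the resulting kernel is too large, run it on the complement. The subtle point --- and where a more direct attempt fails --- is the last step's use of the inclusion $\Gamma^+(R)\supseteq K$. One cannot in general hope to build a small quasi-kernel inside $V(G)\setminus K$ by out-domination alone: as Example~\ref{th:nonCL} shows, the set $\Gamma^-(K)$ of in-neighbours of $K$ may contain no independent set out-dominating $K$. What saves the argument is that $P$ only needs to out-dominate $R$ \emph{inside} $G[R]$; the single further inclusion $\Gamma^+(R)\supseteq K$ --- which holds precisely because $K$ is a kernel and $G$ is source-free --- then boosts this to $\Gamma_2^+(P)=V(G)$ for free.
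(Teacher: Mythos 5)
Your proof is correct, but it takes a genuinely different route from the paper's. The paper modifies the selection rule of the first phase itself: it only ever picks a vertex that still has an out-neighbour among the unreached vertices (stopping as soon as the unreached set is independent), so each pick removes at least two vertices and the output $Q$ has size at most $n/2$ directly; source-freeness is then used to place the independent leftover inside $\Gamma_2^+(Q)$. You instead run the \emph{unmodified} first phase and observe the stronger fact that under the ordering hypothesis it outputs a kernel $K$ (and, since the hypothesis passes to induced subdigraphs with the induced order, every induced subdigraph has a kernel); if $K$ is large you take a kernel $P$ of $G[V(G)\setminus K]$, which is automatically small, and source-freeness plus independence of $K$ gives $K\subseteq\Gamma^+(V(G)\setminus K)\subseteq\Gamma^+(\Gamma_1^+(P))$, so $P$ is a quasi-kernel. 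All steps check out: the indices $i_1<\dots<i_t$ are indeed increasing, independence of the output follows exactly as you say, and the final chain of inclusions is valid. Your argument is a two-pass pigeonhole where the paper's is a one-pass counting argument; what yours buys is the extra structural conclusion that such digraphs are kernel-perfect (in particular they have kernels, not just small quasi-kernels), which is strictly more than the lemma asks for, at the cost of a slightly longer argument.
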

\begin{proof}
We only run the first phase of the CL-algorithm and modify it as follows. If the vertex set in~\eqref{eq:CLremaining} induces no  edges in $G$, we terminate. Otherwise, we select $v_{i_k}$ from the set~\eqref{eq:CLremaining}  with the smallest subscript such that $v_{i_k}$ has an out-neighbor in~\eqref{eq:CLremaining}.

Suppose the modified selection phase terminates after $v_{i_t}$, we claim that $Q:={\{v_{i_k}\}}_{k=1}^t$ is a small quasi-kernel. Indeed, as $k$ increases by $1$, the cardinality of~\eqref{eq:CLremaining} decreases by at least 2, so $t\le |V(G)|/2$, thus $Q$  is small. Next, we show that $Q$ is an independent set. By definition for $i_j < i_k$ we have no $v_{i_j}v_{i_k} \in E$ otherwise at the time of the selection of the $i_k$th vertex the vertex $v_{i_k}$ would not be in~\eqref{eq:CLremaining}. And if $v_{i_k}v_{i_j}\in E$ then, by definition, we also have $v_{i_j}v_{i_k} \in E$, a contradiction. Finally, for $k=t$, the vertices remaining in \eqref{eq:CLremaining} must be second neighbors of $Q$, since they have lost their in-neighbors. So $\Ga_2(Q) = V(G)$ and it is a small \abbr{Qk} indeed.
\end{proof}

\subsection{The Kostochka - Luo - Shan conjecture}\label{sec:Kost}
\noindent In 2020 Kostochka, Luo, and Shan~\cite{Kost} reinvigorated the study of the small \abbr{Qk} conjecture. They introduced a generalization of the small \abbr{Qk} conjecture to digraphs with possible sources:
\begin{conjecture}[Kostochka, Luo, and Shan~\cite{Kost}]
Let $G$ be a digraph, and let $S\subseteq V({G})$ be the set of the vertices with zero in-degree in $G$. Then there exists a quasi-kernel $Q$ with cardinality
\begin{equation}\label{eq:kost}
        |Q|\le \frac{|V(G)| + |S| - |\Ga(S)|}{2}.
\end{equation}
\end{conjecture}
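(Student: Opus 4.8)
Since the statement is phrased as a conjecture, the realistic target is to reduce it to the Small Quasi-kernel Conjecture (Conjecture~\ref{conj:SQK}), and in fact to prove the two equivalent. One direction is immediate: taking $S=\emptyset$ makes $\Gamma^+(S)=\emptyset$, so~\eqref{eq:kost} becomes the bound $|Q|\le|V(G)|/2$ of Conjecture~\ref{conj:SQK}. The content is therefore to deduce~\eqref{eq:kost} from Conjecture~\ref{conj:SQK}.

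The starting observation is the role of the sources, already noted after the proof of Theorem~\ref{th:CL}: a vertex of in-degree $0$ is never the head of an edge, so it belongs to $\Gamma_2^+(Q)$ only if it belongs to $Q$; hence $S\subseteq Q$ for every quasi-kernel $Q$. Consequently $\Gamma^+(S)\subseteq\Gamma_2^+(Q)$ automatically, and independence forces $Q\cap\Gamma^+(S)=\emptyset$. This motivates the key reduction step, which I would check directly: \emph{if $Q'$ is any quasi-kernel of $G':=G[V(G)\setminus\Gamma_1^+(S)]$, then $Q:=S\cup Q'$ is a quasi-kernel of $G$}. Indeed, $Q$ is independent because $S$ has no in-edges and its out-edges all land in $\Gamma^+(S)$, which is disjoint from $Q'$; and $\Gamma_2^+(Q)\supseteq\Gamma_1^+(S)\cup\Gamma_2^+(Q')\supseteq\Gamma_1^+(S)\cup V(G')=V(G)$, since enlarging the edge set only enlarges $\Gamma_2^+$, so $\Gamma_2^+(Q')$ (taken in $G$) still contains $V(G')$. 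Thus~\eqref{eq:kost} for $G$ would follow if $G'$ admitted a quasi-kernel of size at most $\tfrac12|V(G')|=\tfrac12\big(|V(G)|-|S|-|\Gamma^+(S)|\big)$.

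The plan is then to iterate: $G'$ may again have sources, so apply the same step with $G'$ in place of $G$, producing $G''$, and so on, until the process reaches a source-free digraph $G_{\mathrm{core}}$, to which Conjecture~\ref{conj:SQK} applies and yields a quasi-kernel of size $\le\tfrac12|V(G_{\mathrm{core}})|$. Writing $S_0=S,S_1,S_2,\dots,S_k$ for the successive source sets peeled off, the resulting set $\bigcup_i S_i\cup Q_{\mathrm{core}}$ is, by repeated use of the previous paragraph, a quasi-kernel of $G$; its size is $\sum_i|S_i|+|Q_{\mathrm{core}}|$. Comparing this with the target bound~\eqref{eq:kost}, a short computation shows it suffices to have
\begin{equation*}
\sum_{i\ge 1}\big(|S_i|-|\Gamma^+(S_i)|\big)\le 0,
\end{equation*}
i.e. the savings from the out-neighbourhoods of the deeper source layers must, in total, compensate for those layers' sizes.

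That inequality is exactly where the difficulty sits. It can fail — when sources heavily share out-neighbours some layer $S_i$ can be much larger than $|\Gamma^+(S_i)|$ — so the naive ``include every peeled layer in $Q$'' strategy is too wasteful. Repairing it requires, for the deeper (transient) layers, selecting only a suitable $\Gamma_2^+$-dominating subset rather than the whole layer, and then reconciling that choice with the quasi-kernel coming from $G_{\mathrm{core}}$ across the cut between the transient part and the core — in particular handling possible independence conflicts created by edges directed from the transient part into the core, via a local exchange that does not increase $|Q|$. I expect this bookkeeping, and finding the right strengthened induction hypothesis to carry it (most naturally, one that also tracks an ``already dominated, need not be covered again'' set), to be the main obstacle. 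And even a complete such argument only establishes the equivalence with Conjecture~\ref{conj:SQK}: an unconditional proof would in particular have to settle the source-free case, which is Conjecture~\ref{conj:SQK} itself.
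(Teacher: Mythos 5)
The statement you were asked about is an open conjecture: the paper contains no proof of it, and none is known. The only thing the paper records about it is Theorem~\ref{th:KLS} (stated without proof, citing~\cite{Kost}): the KLS conjecture is \emph{equivalent} to Conjecture~\ref{conj:SQK}. You correctly recognize that this equivalence is the realistic target, and the pieces you do verify are sound: the direction KLS $\Rightarrow$ Conjecture~\ref{conj:SQK} by taking $S=\emptyset$; the fact that every source lies in every quasi-kernel and hence $Q\cap\Gamma^+(S)=\emptyset$; and the single peeling step, namely that $S\cup Q'$ is a quasi-kernel of $G$ whenever $Q'$ is a quasi-kernel of $G-\Gamma_1^+(S)$, which reduces \eqref{eq:kost} to finding a quasi-kernel of size at most $\tfrac12\lvert V(G')\rvert$ in $G'=G-\Gamma_1^+(S)$.

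The gap you flag is genuine, and your proposal does not close it: iterating the peeling forces you to pay $\lvert S_i\rvert$ for every subsequent source layer while only being credited $\tfrac12(\lvert S_i\rvert+\lvert\Gamma^+(S_i)\rvert)$ by the target bound, and the deficit $\sum_{i\ge1}(\lvert S_i\rvert-\lvert\Gamma^+(S_i)\rvert)$ can indeed be positive. The observation that unlocks the known argument of Kostochka, Luo and Shan, and which your sketch circles around but does not use, is this: any \emph{new} source $v$ of $G'$ has all of its $G$-in-neighbors in $\Gamma^+(S)$ (an in-neighbor in $S$ would put $v$ itself into $\Gamma^+(S)$), so $v\in\Gamma^+(\Gamma^+(S))\subseteq\Gamma_2^+(S)$ and is already covered by $S$; one therefore does not need a genuine quasi-kernel of $G'$, only an independent set that $2$-dominates $V(G')$ minus its new sources. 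Kostochka, Luo and Shan exploit this not by an explicit layer-by-layer construction with exchanges across the cut, as you propose, but by a minimal-counterexample argument showing that a minimum counterexample to KLS has no sources, whence the equivalence. Finally, even granting the equivalence, no unconditional proof of \eqref{eq:kost} is possible at present, since the source-free case is exactly Conjecture~\ref{conj:SQK}, which remains open; your own closing caveat to this effect is correct.
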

\noindent We say that a digraph is \emph{kernel-perfect} if every induced sub-digraph of it has a kernel. As Richardson showed in 1953 (\cite{perfect}), if there is no directed odd cycle in a digraph then it is kernel-perfect. This property is useful in our discussion:
\begin{theorem}[Kostochka, Luo and Shan~\cite{Kost}]\label{th:Kost}
Suppose that in the digraph $G$, the vertex subset $V(G)\setminus \Ga_1(S)$ has a partition $V_1 \cup V_2$ such that the induced sub-digraphs $V_1[G]$ and $V_2[G]$ are kernel-perfect. Then  $G$  satisfies the KLS conjecture.
\end{theorem}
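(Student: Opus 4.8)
The plan is to first strip off the ``trivial'' part of $G$, then settle the one–part case (this is genuinely a sub‑case, since a partition $V_1\cup V_2$ may have $V_2=\emptyset$, and then $G[R]$ itself is kernel‑perfect), and finally glue the two kernel‑perfect pieces together with Richardson's theorem.

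\emph{Reduction.} Write $R:=V(G)\setminus\Ga_1(S)$, so that $V(G)=S\uplus\Ga(S)\uplus R$. Since every $s\in S$ has in‑degree $0$ it is an out‑neighbour of nothing, hence $s\in\Ga_2(Q)$ forces $s\in Q$; thus every quasi‑kernel contains $S$. Moreover $S$ is independent, and by the definition of $R$ there is no edge of $G$ between $S$ and $R$. So it suffices to produce an independent $Q'\subseteq R$ with $|Q'|\le|R|/2$ and $R\subseteq\Ga_2(S\cup Q')$: then $Q:=S\cup Q'$ is independent, $\Ga_2(Q)\supseteq S\cup\Ga(S)\cup R=V(G)$, and $|Q|=|S|+|Q'|\le|S|+|R|/2=\frac{|V(G)|+|S|-|\Ga(S)|}{2}$, which is exactly~\eqref{eq:kost}.

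\emph{One part ($G[R]$ kernel‑perfect).} I would take a kernel $K$ of $G[R]$. If $|K|\le|R|/2$, set $Q':=K$; every vertex of $R\setminus K$ is an out‑neighbour of $K$, so $R\subseteq\Ga_1(K)\subseteq\Ga_2(S\cup Q')$. If $|K|>|R|/2$, put $\overline K:=R\setminus K$ (so $|\overline K|<|R|/2$ and $G[\overline K]$ is again kernel‑perfect), let $K'$ be a kernel of $G[\overline K]$, and set $Q':=K'$. Then $\overline K\subseteq\Ga_1(K')$, and for $v\in K$ either $v$ has an in‑neighbour in $\Ga(S)$, whence $v\in\Ga(\Ga(S))\subseteq\Ga_2(S)$; or all in‑neighbours of $v$ lie in $R$ (none in $S$, by definition of $R$), and then—$K$ being independent—some in‑neighbour $u$ of $v$ lies in $\overline K\subseteq\Ga_1(K')$, so $v\in\Ga(u)\subseteq\Ga_2(K')$. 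Either way $R=K\cup\overline K\subseteq\Ga_2(S\cup Q')$.

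\emph{Two parts.} Let $K_1,K_2$ be kernels of $G[V_1],G[V_2]$. The key observation is that every directed cycle of $G[K_1\cup K_2]$ alternates between the independent sets $K_1$ and $K_2$, hence has even length; so $G[K_1\cup K_2]$ has no directed odd cycle and, by Richardson's theorem~\cite{perfect}, is kernel‑perfect. Let $K^{\ast}$ be a kernel of $G[K_1\cup K_2]$ and set $Q':=K^{\ast}$. Then $Q'$ is independent, $K_1\cup K_2\subseteq\Ga_1(K^{\ast})$, and every $v\in V_i\setminus K_i$ has an in‑neighbour $k\in K_i\subseteq K_1\cup K_2\subseteq\Ga_1(K^{\ast})$, so $v\in\Ga(k)\subseteq\Ga_2(K^{\ast})$; together with the trivial domination of $S\cup\Ga(S)$ this gives $R\subseteq\Ga_2(S\cup Q')$. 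Thus $S\cup K^{\ast}$ is always a quasi‑kernel of $G$.

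\emph{The main obstacle.} What is left is the size estimate $|K^{\ast}|\le|R|/2$, and this is where I expect the real difficulty. When $|K_1|+|K_2|\le|R|/2$ it is immediate, since $|K^{\ast}|\le|K_1\cup K_2|=|K_1|+|K_2|$. When $|K_1|+|K_2|>|R|/2$ one would like to imitate the one‑part ``complement'' argument: $R\setminus(K_1\cup K_2)=(V_1\setminus K_1)\uplus(V_2\setminus K_2)$ again decomposes into two kernel‑perfect parts and has fewer than $|R|/2$ vertices, so an induction on $|R|$ is natural. The trouble is that the treatment of the big independent set $K$ in the one‑part case needed its complement to be dominated within distance $1$ (so that a vertex of $K$ reached only through an in‑neighbour in the complement still lands in $\Ga_2$), whereas the combined‑kernel construction applied to the complement only gives distance $2$. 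Closing this gap—by a more careful choice of $K_1,K_2,K^{\ast}$, or by iterating the construction together with a size dichotomy so that at each stage one extracts a genuine kernel, not merely a quasi‑kernel, of the relevant complement—is the technical heart of the proof.
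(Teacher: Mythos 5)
The paper states Theorem~\ref{th:Kost} without proof (it is quoted from \cite{Kost} and, as the survey notes, reproved in \cite{gerke} via Theorem~\ref{th:main}), so your attempt has to be judged on its own terms. Your reduction to $R=V(G)\setminus\Ga_1(S)$ and your one-part case are correct and complete; in particular, the observation that a vertex of a too-large kernel $K$ either lies in $\Ga(\Ga(S))$ or has an in-neighbour in $R\setminus K\subseteq\Ga_1(K')$ is exactly the right way to use the hypothesis that $R$ contains no sources.

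The two-part case, however, has a genuine gap --- as you say yourself --- and it is worth recording that the gap is not mere pessimism: the construction ``$K^{\ast}=$ a kernel of $G[K_1\cup K_2]$'' can be \emph{forced} to violate the size bound, so no cleverer choice of $K_1,K_2,K^{\ast}$ within the given partition can save it. Take $S=\{s\}$ and $\Ga(S)=\{c\}$ with the edge $sc$, and let $R=\{a,b_1,\dots,b_k\}$ with $V_1=\{a\}$, $V_2=\{b_1,\dots,b_k\}$ independent, and edges $cb_i$ and $b_ia$ for all $i$. Both parts are trivially kernel-perfect, $K_1=\{a\}$ and $K_2=V_2$ are the \emph{unique} kernels of their parts, and the unique kernel of $G[K_1\cup K_2]=G[R]$ is $K^{\ast}=\{b_1,\dots,b_k\}$ (each $b_i$ has in-degree $0$ inside $R$), of size $k>|R|/2$ for $k\ge2$ --- yet $\{s,a\}$ is a quasi-kernel meeting~\eqref{eq:kost}. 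What the two-part construction ignores, and what your own one-part argument already exploits, is that vertices of $R$ with an in-neighbour in $\Ga(S)$ lie in $\Ga_2(S)$ and need not be dominated by $Q'$ at all; any correct argument must discount these vertices, and in the purely source-free situation it must additionally play two candidate quasi-kernels against each other (in the style of the counting at the end of the paper's proof of Theorem~\ref{th:main}) rather than bound a single $K^{\ast}$ directly. So the ``technical heart'' you point to is genuinely missing, and closing it needs a new ingredient, not just tighter bookkeeping of the construction you already have.
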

\noindent Due to Richardson's theorem, Kostochka, Luo and Shan also proved the following consequence of the previous theorem:
\begin{corollary}
The KLS conjecture holds for every orientation of each graph with a chromatic number of 4. By the Four Color  Theorem, this implies that every orientation of each planar graph satisfies the conjecture.
\end{corollary}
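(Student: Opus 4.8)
The plan is to derive the statement directly from \Cref{th:Kost} and Richardson's theorem, using the elementary fact that a union of two color classes spans a bipartite --- hence odd-cycle-free --- sub-digraph.

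So suppose $G$ has chromatic number at most $4$, fix a proper $4$-coloring of $G$ with color classes $C_1,C_2,C_3,C_4$ (some possibly empty), let $S$ be the set of sources of $G$, and put $W:=V(G)\setminus\Ga_1(S)$. I would take the partition
\[
V_1:=W\cap(C_1\cup C_2),\qquad V_2:=W\cap(C_3\cup C_4)
\]
of $W$. Since $C_1$ and $C_2$ are independent sets, the underlying undirected graph of $G[V_1]$ is bipartite, so every cycle of $G[V_1]$ --- in particular every directed cycle --- has even length; the same holds for every induced sub-digraph of $G[V_1]$, bipartiteness being hereditary. By Richardson's theorem~\cite{perfect}, every digraph with no directed odd cycle has a kernel, and applying this to each induced sub-digraph shows that $G[V_1]$ is kernel-perfect; symmetrically $G[V_2]$ is kernel-perfect. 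Now \Cref{th:Kost} applies to the partition $V_1\cup V_2$ of $V(G)\setminus\Ga_1(S)$ and yields that $G$ satisfies the KLS conjecture. For the planar case, the Four Color Theorem gives $\chi(G)\le 4$ for every planar graph $G$, and passing to an orientation leaves the underlying graph unchanged, so the first part covers every orientation of every planar graph.

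There is essentially no obstacle here: the whole content is the observation that pairing the four color classes into two halves produces two bipartite --- hence, by Richardson, kernel-perfect --- induced sub-digraphs, which is exactly the hypothesis demanded by \Cref{th:Kost}; the rest is bookkeeping. The only point worth a moment's care is that one must first intersect the color classes with $W=V(G)\setminus\Ga_1(S)$ before forming $V_1,V_2$, but this is harmless, since induced sub-digraphs of odd-cycle-free digraphs are again odd-cycle-free.
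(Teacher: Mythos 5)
Your proof is correct and follows exactly the argument the paper intends (the paper leaves it implicit, attributing it to Richardson's theorem plus \Cref{th:Kost}): pair the four color classes into two bipartite halves, observe these induce odd-cycle-free hence kernel-perfect sub-digraphs, and apply \Cref{th:Kost}. Note only that the paper's statement of Richardson's theorem already asserts kernel-perfectness for odd-cycle-free digraphs, so your extra hereditary step is harmless but not needed.
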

Finally, they proved that if the KLS conjecture fails, then a minimum size counterexample does not contain a source. That leads to the following, very nice result:
\begin{theorem}[Kostochka, Luo, and Shan, \cite{Kost}]\label{th:KLS}
The KLS conjecture and the small \abbr{Qk} conjecture are in fact equivalent.
\end{theorem}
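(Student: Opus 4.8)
We separate the equivalence into its two implications; all the difficulty is concentrated in one of them.

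\emph{The KLS conjecture implies the small \abbr{Qk} conjecture.} If $G$ has no source, then its set $S$ of sources is empty, so $\abs{S}-\abs{\Ga(S)}=0$ and the bound \eqref{eq:kost} reads $\abs{V(G)}/2$. Hence, for source-free $G$, the quasi-kernel guaranteed by the KLS conjecture has size at most $\abs{V(G)}/2$, which is exactly the assertion of \Cref{conj:SQK}.

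\emph{The small \abbr{Qk} conjecture implies the KLS conjecture.} Suppose not, and among all digraphs violating \eqref{eq:kost} choose one, $G$, with $\abs{V(G)}$ minimum. By the fact recorded just above the theorem — a vertex-minimal counterexample to the KLS conjecture contains no source — the digraph $G$ has no source, so its set of sources is empty. But then the right-hand side of \eqref{eq:kost} for $G$ equals $\abs{V(G)}/2$, and the small \abbr{Qk} conjecture produces a quasi-kernel of $G$ of size at most $\abs{V(G)}/2$, contradicting the choice of $G$. Hence the KLS conjecture holds.

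\emph{Where the work is.} The whole argument hinges on the quoted structural fact, and reproving it is the genuine obstacle. The approach is induction on $\abs{V(G)}$: given a source $s$ in a minimal counterexample, one deletes $s$ — together with a carefully chosen subset of $\Ga(s)$ — applies the bound \eqref{eq:kost}, already available for the smaller digraph, and lifts the resulting quasi-kernel $Q'$ back to $G$ by adjoining $s$ (legitimate, since a source lies in every quasi-kernel). The delicate point is that this lift is not automatic: erasing out-neighbours of $s$ can turn their remaining neighbours into new sources of the smaller digraph, every new source is forced into $Q'$, and a new source lying in $\Ga(s)$ makes $Q'\cup\{s\}$ non-independent; conversely, erasing more of $\Ga(s)$ to prevent this can shrink the out-neighbourhood of the new source set and so overshoot the budget in \eqref{eq:kost}. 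Balancing these two pressures — deciding exactly which out-neighbours of $s$ to remove and then checking that $Q'\cup\{s\}$ is an independent quasi-kernel of the permitted size — is the technical heart of the Kostochka–Luo–Shan argument; once it is in hand, the equivalence above follows in a line.
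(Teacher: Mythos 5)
Your proposal is correct and follows essentially the same route as the paper, which likewise derives the equivalence in one line from the cited Kostochka--Luo--Shan fact that a vertex-minimum counterexample to the KLS conjecture is source-free (the forward direction being the trivial specialization $S=\emptyset$). Your closing paragraph is an honest sketch rather than a proof of that structural fact, but the survey also defers it entirely to \cite{Kost}, so relative to the paper's own treatment nothing is missing.
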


\bigskip\noindent In 2023 Ai, Gerke, Gutin, Yeo and Zhou (\cite{gerke}) developed a method to discover new digraph classes with the small \abbr{Qk} property, and also provide new, simpler proofs for a series of older results. For example, they reproved Theorem \ref{th:Kost}.

Let us define the class $\K$ of directed graphs in the following way: if $G\in \K$ then there is no induced directed subgraph in $G$ which is isomorphic to $\vec K_{1,4}$ or to $\vec K_{1,4}+e$. The first digraph is a four-star, the edges directed from the center to the leaves, the second one is the same plus one directed edge between two leaves.\footnote{The paper \cite{gerke} uses inverse notations comparing with ours. Therefore using their naming convention may cause misunderstanding, so we will ignore the names they used.} The paper's main result is the following:
\begin{theorem}[Ai, Gerke, Gutin, Yeo, and Zhou \cite{gerke}]\label{th:gerke}
Each source-free digraph in class $\K$ has a small \abbr{Qk}.
\end{theorem}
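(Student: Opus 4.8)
The plan is to take a quasi-kernel $Q$ of $G$ of minimum cardinality and to show, assuming $G$ is source-free and $G\in\K$, that $|Q|\le |V(G)|/2$. Put $n=|V(G)|$ and $B=V(G)\setminus Q$, and suppose for contradiction that $|Q|>n/2$, so $|B|<|Q|$. Two easy facts will be in force throughout: since $G$ is source-free and $Q$ is independent, every $q\in Q$ has an in-neighbor in $B$, hence $\sum_{b\in B}\abs{\Ga(b)\cap Q}\ge|Q|>|B|$; and since $\Ga(b)\cap Q$ is an independent subset of $\Ga(b)$ for each $b$, the absence of an induced $\vec K_{1,4}$ forces $\abs{\Ga(b)\cap Q}\le 3$ for all $b\in B$. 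In particular some $b\in B$ has two or three out-neighbors in $Q$. The target is the stronger assertion that in a minimum quasi-kernel every vertex of $B$ has \emph{at most one} out-neighbor in $Q$; this immediately gives $|Q|\le\sum_{b\in B}\abs{\Ga(b)\cap Q}\le|B|=n-|Q|$, the desired bound (and a contradiction with $|Q|>n/2$).

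The mechanism for reaching this target is a swapping move, in the spirit of the modified Chv\'atal--Lov\'asz selection of Lemma~\ref{th:observ}. Suppose $b\in B$ has $\Ga(b)\cap Q=\{q_1,\dots,q_s\}$ with $2\le s\le 3$, and consider the strictly smaller set $Q_b:=(Q\setminus\{q_1,\dots,q_s\})\cup\{b\}$. Because $b\to q_i$ for every $i$, one has $\Ga(q_i)\subseteq\Ga(\Ga(b))$ and $\Ga_2(Q_b)\supseteq\Ga_2\bigl(Q\setminus\{q_1,\dots,q_s\}\bigr)\cup\{b,q_1,\dots,q_s\}\cup\Ga(q_1)\cup\dots\cup\Ga(q_s)$; since $\Ga$ distributes over unions, comparing with $\Ga_2(Q)=V(G)$ shows that the only vertices $Q_b$ can fail to dominate lie in $\Ga(\Ga(q_1))\cup\dots\cup\Ga(\Ga(q_s))$, and each such ``defect'' vertex $w$ is reached from $Q$ only by a length-two walk through some $q_i$ (via a common intermediate vertex, which necessarily lies in $\Ga(Q)$) and by no other walk of length at most two. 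Likewise $Q_b$ fails to be independent only if $b$ has an in-neighbor in $Q\setminus\{q_1,\dots,q_s\}$, which cannot happen when $b\notin\Ga(Q)$ and otherwise means there is some $q_0\in Q\setminus\{q_1,\dots,q_s\}$ with $q_0\to b$. If for some admissible $b$ neither obstruction occurs, then $Q_b$ is a quasi-kernel of size less than $|Q|$, contradicting minimality; hence the standing assumption forces every $b\in B$ with at least two out-neighbors in $Q$ to be obstructed in one of these two ways.

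It remains to refute this, and this is the step that must genuinely use the exclusion of $\vec K_{1,4}+e$ rather than only of $\vec K_{1,4}$. With the independent triple $\{q_1,q_2,q_3\}\subseteq\Ga(b)$ in hand (the case $s=2$ being similar and in fact easier), the ban on an induced $\vec K_{1,4}+e$ says that every further out-neighbor of $b$ is adjacent to at least two of $q_1,q_2,q_3$; I expect to combine this with an extremal choice of the obstructed $b$ — for instance one whose out-neighbor triple in $Q$ is minimal in a well-chosen vertex order, or the order coming from the CL-algorithm — so that a defect vertex $w$, or a bad in-neighbor $q_0\in Q$ of $b$, together with $q_1,q_2,q_3$ and one more carefully chosen out-neighbor, realizes an induced $\vec K_{1,4}$ or $\vec K_{1,4}+e$ in $G$, contradicting $G\in\K$. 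This translation of ``no size-reducing swap is available'' into ``a forbidden induced substar appears'' is where I expect the real work to be: it will presumably require treating the cases $b\in\Ga(Q)$ and $b\notin\Ga(Q)$ separately, a sharper description of the defect set $\bigcup_i\Ga(\Ga(q_i))\setminus\Ga_2(Q_b)$, and possibly an iteration of the swap that tracks how the first-out-neighbor set $\Ga(Q)\setminus Q$ and the residual set $V(G)\setminus\Ga_1(Q)$ change along the way. Once this is in place, no vertex of $B$ can have two out-neighbors in a minimum quasi-kernel, and the counting above finishes the proof.
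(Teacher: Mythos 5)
This survey does not reproduce a proof of Theorem~\ref{th:gerke} (it only cites \cite{gerke}, noting that the proofs there start from a minimum-size \abbr{Qk}), so I am judging your proposal on its own merits. Your overall frame --- take a minimum quasi-kernel $Q$, show every $b\in B=V(G)\setminus Q$ has at most one out-neighbour in $Q$, and count --- is consistent in spirit with the minimality-based arguments of \cite{gerke}, and the bookkeeping around the swap $Q\mapsto Q_b$ is correct as far as it goes. But the proof has a genuine gap: the decisive step, converting ``the swap is obstructed'' into ``an induced $\vec K_{1,4}$ or $\vec K_{1,4}+e$ appears,'' is left entirely as a hope (``I expect\dots,'' ``presumably\dots''), and it cannot be carried out in the form you set up because your intermediate target is false. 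Consider the $7$-vertex digraph with edges $b\to q_1$, $b\to q_2$, $q_1\to b$, $q_1\to x_1$, $x_1\to w_1$, $q_2\to x_2$, $x_2\to w_2$, $w_1\to b$, $w_2\to b$. It is source-free, and since every out-degree is at most $2$ it trivially lies in $\K$. One checks that no single vertex is a quasi-kernel, while $Q=\{q_1,q_2\}$ is one; hence $Q$ is a minimum \abbr{Qk}, yet $b$ has two out-neighbours in $Q$. The swap $Q_b=\{b\}$ is indeed obstructed (it misses $w_1$ and $w_2$), but no forbidden induced subdigraph is present anywhere --- when $s=2$ and $b$ has out-degree $2$, the five-vertex forbidden configurations simply have nothing to say. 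So minimality of $Q$ together with $G\in\K$ does \emph{not} yield your structural claim; any correct argument must inject the counting hypothesis $\abs{Q}>n/2$ into the structural step itself, which your outline never does.

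Two further, smaller problems. First, the opening bound $\abs{\Ga(b)\cap Q}\le 3$ does not follow from excluding induced $\vec K_{1,4}$ and $\vec K_{1,4}+e$ alone, because this paper explicitly allows anti-parallel edges: if some $q_i\to b$ in addition to $b\to q_i$, the induced subdigraph on $\{b,q_1,\dots,q_4\}$ is isomorphic to neither forbidden digraph (the extra edge of $\vec K_{1,4}+e$ joins two leaves, not a leaf to the centre). Second, even granting a repaired structural claim for $s=3$, the case $s=2$ is where the forbidden-subgraph hypothesis is weakest, not ``similar and in fact easier'' --- as the example above shows, for a vertex $b$ of small out-degree the hypothesis $G\in\K$ imposes no constraint at all on its out-neighbourhood. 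As it stands, the proposal is an honest plan with correct preliminary reductions, but the theorem is not proved.
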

\noindent Another result deals with \emph{good} \abbr{Qk}. A quasi-kernels $Q$ has this property, if $Q\subset \Ga(\Ga(Q))$.
\begin{theorem}\label{th:good}
A source-free digraph with a good \abbr{Qk} satisfies the small \abbr{Qk} conjecture.
\end{theorem}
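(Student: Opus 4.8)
The plan is to split on $|Q|$. If $|Q|\le |V(G)|/2$ there is nothing to prove, so the whole task is to exhibit a quasi-kernel of $G$ lying inside $V(G)\setminus Q$: when $|Q|>|V(G)|/2$ we have $|V(G)\setminus Q|<|V(G)|/2$, so any such quasi-kernel is automatically small. (More symmetrically: two disjoint quasi-kernels of $G$ cannot both exceed $|V(G)|/2$ vertices, so producing any quasi-kernel disjoint from $Q$ suffices.)

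Write $A:=\Gamma^+(Q)$; since $Q$ is independent, $A\subseteq V(G)\setminus Q$. Because $Q$ is a quasi-kernel, $V(G)=\Gamma_1^+(Q)\cup\Gamma^+(\Gamma^+(Q))$, and because it is \emph{good}, $\Gamma^+(\Gamma^+(Q))\supseteq Q$ as well; combining these two facts yields $\Gamma^+(A)\supseteq V(G)\setminus A$, i.e. $\Gamma_1^+(A)=V(G)$. Thus $A$ is a distance-one out-dominating set of $G$, and in particular every vertex of $V(G)\setminus A$ — a fortiori every vertex of $Q$ — has an in-neighbour in $A$. This already disposes of a clean case: if $G[A]$ is kernel-perfect — for instance if it has no directed odd cycle, by Richardson's theorem~\cite{perfect} — then a kernel $K$ of $G[A]$ has $\Gamma_1^+(K)\supseteq A$, hence $\Gamma_2^+(K)\supseteq\Gamma_1^+(A)=V(G)$, so $K$ is a quasi-kernel of $G$ contained in $A\subseteq V(G)\setminus Q$, and we are done.

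In general, $G[A]$ need not have a kernel, and one must replace $K$ by the output of a Chv\'atal--Lov\'asz-type greedy on $G[A]$ run with a carefully chosen vertex ordering (cf. the proof of Theorem~\ref{th:CL}). The goal of the first phase is to select a set $C\subseteq A$ with $G[C]$ acyclic and $\Gamma_1^+(C)\cap A=A$ that additionally contains a set $R$ of ``entry points'' satisfying $\Gamma^+(R)\supseteq V(G)\setminus A$; the second phase — the index-reversal trick, legitimate because $G[C]$ is acyclic — then extracts an independent $D\subseteq C$ with $\Gamma_1^+(D)\supseteq C\supseteq R$. Such a $D$ is a quasi-kernel of $G[A]$, so it reaches all of $A$ within distance $2$; and since each vertex of $V(G)\setminus A$ has an in-neighbour in $R\subseteq\Gamma_1^+(D)$, it also reaches all of $V(G)\setminus A$ within distance $2$. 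Hence $D$ is the desired small quasi-kernel.

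The hard part is exactly arranging $R$ and the ordering so that these entry points (or at least a hitting set, inside $A$, for the in-neighbourhoods of the vertices of $V(G)\setminus A$) actually survive inside $C$ and stay at distance $\le 1$ from $D$. The danger is that the greedy may ``pre-dominate'' such a vertex before selecting it, pushing it to distance $2$ from $D$ and thereby pushing the vertex of $V(G)\setminus A$ it was meant to cover out to the useless distance $3$. Controlling this seems to force one to work with a near-independent (or inclusion-minimal) dominating set of $V(G)\setminus A$ inside $A$ and to argue, using the structure of $A=\Gamma^+(Q)$ together with goodness, that the first phase cannot fully dominate such a set before it has been picked — and this is the step I would expect to occupy the bulk of the proof.
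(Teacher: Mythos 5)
Your reduction to finding a quasi-kernel disjoint from $Q$ is sound, and the observation that $\Gamma_1^+(A)=V(G)$ for $A:=\Gamma^+(Q)$ is correct, but the proof has a genuine gap that you yourself flag: the case where $G[A]$ has no kernel is only sketched, and the sketch does not go through as stated. The Chv\'atal--Lov\'asz greedy on $G[A]$ produces an independent $D$ with $\Gamma_2^+(D)\supseteq A$, but a vertex of $V(G)\setminus A$ all of whose in-neighbours in $A$ lie at distance exactly $2$ from $D$ ends up at distance $3$, so $D$ need not be a quasi-kernel of $G$. Arranging the vertex ordering so that a dominating set $R$ of $V(G)\setminus A$ survives into the first-phase set $C$ \emph{and} remains at distance at most $1$ from $D$ is exactly the missing step, and nothing in your argument rules out the greedy ``pre-dominating'' all of $R$ before it can be selected. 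As it stands the argument is incomplete.

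The paper's proof avoids this entirely by shrinking $Q$ itself rather than relocating the quasi-kernel. Goodness means every vertex of $Q$ has an in-neighbour in $\Gamma^+(Q)$, so any $Q'\subseteq Q$ with $\Gamma^+(Q')=\Gamma^+(Q)$ is still a quasi-kernel: the discarded vertices of $Q$ become second out-neighbours of $Q'$ via $\Gamma^+(Q')$, while everything previously reached through $\Gamma^+(Q)$ is reached unchanged. Selecting the vertices of $Q'$ greedily so that each new vertex strictly enlarges the accumulated out-neighbourhood gives $|Q'|\le|\Gamma^+(Q')|$, and since $Q'$ and $\Gamma^+(Q')$ are disjoint this forces $|Q'|\le|V(G)|/2$. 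The lesson is that goodness is most naturally exploited to \emph{delete} vertices from $Q$, not to build a new quasi-kernel inside $\Gamma^+(Q)$; the latter route runs into the distance-$3$ obstruction you identified but did not overcome.
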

The proof one can find in the paper \cite{gerke} (similarly to the other proofs there) uses a minimal size \abbr{Qk} of the digraph. However, in light of Theorem \ref{th:Langlois}, finding such smallest example is not easy. Therefore here we insert our proof of this result. This starts with a good \abbr{Qk} and algorithmically finds the required small \abbr{Qk}.\footnote{Here we mention that while that preprint was published only in 2023, the mentioned results from that article were developed and proved well before the preprint version of \cite{gerke} was published. The main reason for the late publication was that we hoped to prove completely the small \abbr{Qk}, unfortunately without success.}

\begin{proof}
Assume that $Q$ is a good \abbr{Qk}. By definition, each vertex in $Q$ has an in-neighbor in $\Ga(Q)$. Therefore if $Q' \subseteq Q$ satisfies $\Ga(Q')=\Ga(Q)$, then $Q'$ is also a quasi-kernel. To define a suitable $Q'$, select vertices from $Q$ one by one, such that in each turn the number of out-neighbors of the set of vertices chosen so far increases strictly. When no additional vertices can be selected, every element of $\Ga(Q)$ is an in-neighbor of one of the selected vertices. The number of selected vertices is at most $\min(|Q|,|\Ga(Q)|)$, so this is a small \abbr{Qk}.
\end{proof}
This result surpasses a nice observation of van Hulst (\cite{Hulst}) from 2021: if a source-less digraph has a kernel, then it satisfies the small \abbr{Qk} conjecture.

Another result from \cite{gerke} is the following:
\begin{theorem}[Ai, Gerke, Gutin, Yeo, and Zhou]\label{th:main}
If a digraph $ G$ with no sources has a quasi-kernel $Q$ such that the induced subgraph on $V(G)\setminus \Ga_1(Q)$ admits a kernel $K$ (for example if $V(G)\setminus \Ga_1(Q)$ is an independent set), then $G$ has a small quasi-kernel.
\end{theorem}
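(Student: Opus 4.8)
The plan is to build a quasi-kernel that lies almost entirely outside $Q$, using the kernel $K$ to cover the ``far'' layer $W:=V(G)\setminus\Ga_1(Q)$. The first move is a reduction: since $Q$ is itself a quasi-kernel, if $|Q|\le |V(G)|/2$ we are done, so I would assume $|Q|>|V(G)|/2$. Writing $R:=\Ga(Q)$, the vertex set splits as $V(G)=Q\sqcup R\sqcup W$ with $|R|+|W|<|V(G)|/2$, so it now suffices to produce \emph{any} quasi-kernel of size at most $|R|+|W|$ — in particular, one consisting of $K$ together with at most $|R|$ further vertices of $Q$.

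Next I would record the structure forced by the definitions: no arc runs from $Q$ into $W$ (by the definition of $W$); $W\subseteq\Ga(R)$ because $Q$ is a quasi-kernel; every vertex of $Q$ has an in-neighbour in $R\cup W$ because $G$ is source-free; and $W\setminus K\subseteq\Ga(K)$ because $K$ is a kernel of $G[W]$. Put $Q_1:=Q\setminus\Ga(K)$. I would first check that $K\cup Q_1$ is a quasi-kernel: it is independent (no arc from $K$ to $Q_1$ by the choice of $Q_1$, and none from $Q_1\subseteq Q$ into $W\supseteq K$), and it $2$-dominates $V(G)$ — $W$ is $1$-dominated by $K$; every $q\in Q\setminus Q_1\subseteq\Ga(K)$ is $1$-dominated by $K$; each $r\in R$ has a $Q$-in-neighbour, which is either in $Q_1$ (so $r\in\Ga(Q_1)$) or in $Q\cap\Ga(K)$ (so $r$ is a second out-neighbour of $K$ via $K\to(Q\cap\Ga(K))\to r$). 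Then I would thin the $Q_1$-part: a vertex of $Q_1$ with an in-neighbour in $W$ is automatically a second out-neighbour of $K$ and can be dropped, and an $r\in R$ with an in-neighbour in $Q\cap\Ga(K)$ is already covered, so one only has to keep a subset $Q'\subseteq Q_1$ that $1$-dominates the remaining vertices of $R$; chosen greedily (one fresh $R$-vertex per added element) this should cost at most $|R|$ vertices, giving the quasi-kernel $K\cup Q'$ of size at most $|R|+|W|<|V(G)|/2$.

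The hard part is this final size estimate, and it does not go through naively: a vertex kept in $Q'$ may be needed only to $1$-dominate some $r\in R$ which $K$ itself reaches only at distance $2$ (this happens exactly when every $Q$-in-neighbour of $r$ lies in $\Ga(K)$), and then the out-neighbours of $r$ inside $Q$ sit at distance $3$ and must be adjoined one at a time, of which there may be arbitrarily many; moreover simple examples show the particular $Q$ handed to us need not be one from which a small quasi-kernel can be extracted directly, so another quasi-kernel of $G$ may have to be used. Following Ai, Gerke, Gutin, Yeo and Zhou, I would therefore run the argument on a counterexample $G$ of least order: the reduction above gives $|Q|>|V(G)|/2$, and the crux becomes showing that $W\neq\emptyset$ is impossible — when $W\neq\emptyset$ one deletes $W\setminus K$ and repairs the at most $|W\setminus K|$ newly created sources (using the source-handling machinery behind Theorems~\ref{th:Kost} and~\ref{th:KLS}) to reach a strictly smaller source-free digraph still satisfying the hypotheses, while when $W=\emptyset$ the set $Q$ is a kernel, so $G$ has a small quasi-kernel by Theorem~\ref{th:good}, contradicting the choice of $G$.
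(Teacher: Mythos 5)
Your first half is sound but, as you yourself note, stalls exactly where the real difficulty lies. The reduction to $|Q|>|V(G)|/2$, the structural observations, and the verification that $K\cup(Q\setminus\Ga(K))$ is a quasi-kernel are all correct; the obstruction is that a vertex of $R=\Ga(Q)$ all of whose in-neighbours in $Q$ lie in $\Ga(K)$ forces you to retain its distance-$3$ out-neighbours in $Q$, and no bound on the number of retained vertices follows. Having identified this, you do not resolve it: the fallback paragraph is only the outline of a minimal-counterexample argument in which the decisive step --- ``delete $W\setminus K$ and repair the newly created sources'' --- is never defined. It is not explained what the repaired digraph is, why it is source-free, why it still has a quasi-kernel whose far layer admits a kernel, why it is strictly smaller, or how a small quasi-kernel of it pulls back to one of $G$ with the right bound; invoking ``the machinery behind Theorems~\ref{th:Kost} and~\ref{th:KLS}'' does not supply these steps. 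So the proof is genuinely incomplete in its main case $W\neq\emptyset$. (The case $W=\emptyset$ is fine: $Q$ is then a kernel, hence a good quasi-kernel in a source-free digraph, and Theorem~\ref{th:good} applies.)

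The paper closes exactly the gap you ran into by a purely constructive device that your proposal is missing: a \emph{second} candidate quasi-kernel and an averaging argument between the two. It stratifies $A=Q$ and $B=\Ga(Q)$ by distance from $K$: $D=A\cap\Ga(K)$, $J=\Ga(D)\subseteq B$, $F=(\Ga(J)\cap A)\setminus D$ (precisely your problematic distance-$3$ vertices), $H=\Ga(F)\setminus J$, $B'=B\setminus(J\cup H)$, and picks a minimal $A'\subseteq A\setminus(D\cup F)$ dominating $B'$ and a minimal $F_2\subseteq F$ dominating $H$, so that $\abs{A'}\le\abs{B'}$ and $\abs{F_2}\le\abs{H}$. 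Both $Q_1=K\cup F\cup A'$ and $Q_2=A\setminus(F\setminus F_2)$ are then quasi-kernels, and assuming both have more than $|V(G)|/2$ vertices and summing the two resulting inequalities gives $\abs{K}>\abs{C}$, which is absurd. In other words, when the troublesome set $F$ is small it is placed into the quasi-kernel wholesale, and when it is large its size is used to shrink $Q$ itself; your proposal attempts only the first of these two moves and therefore cannot conclude.
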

They used this as a lemma for an alternative proof of Theorem \ref{th:Kost}. The proof in \cite{gerke} uses minimality and is therefore non-constructive. So here again, we insert our proof.

\begin{proof}
For simplicity let us use the notations $A=Q, B=\Ga{Q}$ and $C=V(G)\setminus \Ga_1$. We define further subsets of the vertices in the graph $G$ and create a visual representation to demonstrate them, see Figure~\ref{fig1}.
    \begin{itemize}
        \item Let $D:=A\cap \Ga(K)$. That is, $D$ is the set of out-neighbors of $K$ in $A$.
        \item Let $J:=\Ga(D)$. Note that, $J\subseteq B$ since $\Ga(A)=B$.
        \item Let $F:=(\Ga(J)\cap A)\setminus D$.
        \item Let $H:=\Ga(F)\setminus J$.
        \item Let $B':=B\setminus(J\cup H)$.
    \end{itemize}

    \begin{figure}[ht]
        \centering
        \begin{tikzpicture}[scale=.7]
            \draw [rounded corners] (6,0) rectangle (10,1);
            \node at (11,0.5) {\Large C};
            \draw [rounded corners] (3,3) rectangle (10,4);
            \node at (11,3.5) {\Large B};\node at (8,3.5) {\Large B'};
            \draw [rounded corners] (0,6) rectangle (10,7);
            \node at (11,6.5) {\Large A};
            \draw [pattern=dots] (7.5,6.5) ellipse (50pt and 7pt);
            \node at (9.7,6.5) {\Large A'};
            \draw[rounded corners,pattern=dots,pattern color=black!20] (8,0) rectangle (9.5,1);
            \node at (8.6,0.5) {\Large K};
            \draw[rounded corners,pattern=grid,pattern color=black!20] (0,6) rectangle (2.5,7);
            \node at (1.5,6.5) {\Large D};
            \draw[->,very thick] (8.7,0) .. controls (5,-3) and (0,4.5) .. (1,6);
            \draw[rounded corners,pattern=north east lines,pattern color=black!20] (3,3) rectangle (4.5,4);
            \node at (3.5,3.5) {\Large J};
            \draw[->,very thick] (2,6) -- (3.7,4);
            \draw[rounded corners,pattern=north west lines,pattern color=black!20] (2.7,6) rectangle (4.5,7);
            \node at (3.5,7.5) {\Large F};\node at (3.2,6.5) {\Large F${}_1$};\node at (4,6.5) {\Large F${}_2$}; \draw (3.5,6) -- (3.5,7);
            \draw[->,very thick] (4,4) -- (3.5,6);
            \draw[rounded corners,pattern=crosshatch,pattern color=black!20] (4.7,3) rectangle (6,4);
            \node at (5.3,3.5) {\Large H};
            \draw[->,very thick] (4,6) -- (5.4,4);
        \end{tikzpicture}
        \caption{A sketch of $G$; the set $A$ is a \abbr{Qk} of $G$, the set $B$ is out-neighborhood of $A$, and $C$ is the remaining set of vertices that are in the second out-neighborhood of $A$. We assume that $C$ has a kernel $K$. The sets $A$ and $B$ are partitioned into smaller subsets, as described in the proof of Theorem~\ref{th:good}.}\label{fig1}
        \end{figure}
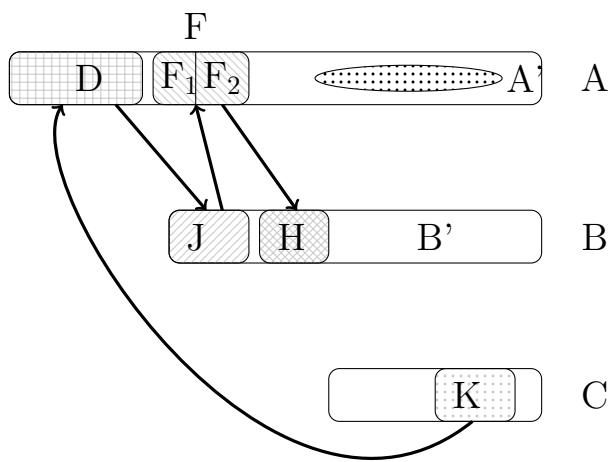

To find a small \abbr{Qk} in $G$, we define two quasi-kernels of $G$ and show that one of them is small.

The subset $D$ is a dominating set for $J$ and the subset $J$ is a dominating set for $ F$. The subset $F$ is a dominating set for $H$. Finally, $A\setminus(D\cup F)$ is a dominating set  for $B'$. Let $A'$ be a minimal subset of $ A\setminus (D \cup F)$ with the property, that $B' \subseteq \Ga(A')$, such subset exists since $B' \subseteq \Ga(A\setminus (D\cup F))$. Thus we have $\abs{A'} \le \abs{B'}$.

Next, let $F_2$ be a minimal subset of $F$ such that $H \subseteq \Ga(F_2)$. Again, this means that $\abs{F_2} \le \abs{H}$. We denote the remaining vertices of $F$ by $F_1=F \setminus F_2$.

Let $A'':=A \setminus (A' \cup F \cup G)$; we have $A''\subseteq \Ga(B'\cup H \cup (C\setminus K) )$, since all vertices in $A$ have a positive in-degree.

Now we define two different quasi-kernels. First, let $Q_1:= K \cup F \cup A'$. It is clear that $Q_1$ is an independent subset. Then $(C\setminus K) \cup D$ is dominated by $K$. Furthermore, $H$ is dominated by $F$. Finally, $B'$ is dominated by $A'$. Moreover, $J$ is two-dominated by~$K$. Finally, $A''$ is two-dominated by $K\cup F \cup A'$ via the subsets $C\setminus K$, $H$ and $B'$. So indeed, $Q_1$ is a \abbr{Qk}.

\noindent  The second \abbr{Qk} is $Q_2:= A \setminus F_1$. Indeed, $E = \Ga(D)$, and $H\subseteq \Ga(F_2)$, moreover $B' \subseteq \Ga(A\setminus [G\cup F])$. Furthermore $C\subseteq \Ga(B)$. So $Q_2$ is \abbr{Qk} indeed.

\medskip
\noindent In the rest of the proof, we will show that at least one of them is a small \abbr{Qk} of $G$.
We will prove this by contradiction. Assume that none of them is smaller than $n/2$; consequently, we have
\begin{align*}
        \abs{Q_1}&= \abs{K} + \abs{F} + \abs{A'} > \abs{D} + \abs{B} + (\abs{A} - \abs{F}  - \abs{A'} - \abs{D}) + \abs{C} -\abs{K}=\\
        &= \abs{A} - \abs{F} - \abs{A'} + \abs{B} + \abs{C} - \abs{K}.
\end{align*}
Thus we have
\begin{equation}\label{eq:egy}
    2(\abs{K} + \abs{A'} + \abs{F}) > \abs{A} + \abs{B} + \abs{C}.
\end{equation}
For $Q_2$ we have $\abs{Q_2}=\abs{A} - \abs{F_1} >  \abs{F_1} + \abs{B} + \abs{C}$, thus we have
\begin{equation}\label{eq:ket}
    \abs{A} > 2\abs{F_1} + \abs{B} + \abs{C}.
\end{equation}
Summing up \cref{eq:egy} and \cref{eq:ket} we get $\abs{K} + \abs{F_2} + \abs{A'} > \abs{B} + \abs{C}$.
However, as we saw earlier $\abs{F_2} \le \abs{H}$ and $\abs{A'} \le \abs{B'}$, therefore $\abs{F_2} + \abs{A'} \le \abs{B}$. Consequently, we get $\abs{K} > \abs{C}$, a contradiction.
\end{proof}

\bigskip\noindent We remark here, without going to detail, that in the paper \cite{gerke} the author also studied source-free, one-way split graphs. They did prove the conjecture for this graph class, showing that there always exists  \abbr{Qk} of size at most $\frac{n+3}{2}-\sqrt{n},$ furthermore they gave examples to show that this is sharp. In the paper \cite{LMRV23} Langlois, Meunier, Rizzi and Vialette studied unrestricted, source-free split graphs. For this case they proved, that there always exists a \abbr{Qk} of size at most $2n/3.$

\subsection{Sporadic results on small quasi-kernels}\label{sec:tour}
In this subsection, we survey results on digraphs $G$ similar to a tournament. We say that $G$ is a \emph{hairy tournament} if $V(G)$ is partitioned by two subsets $A$ and $I$ such that $G[A]$ is a tournament and each vertex in $I$ has exactly one incident edge: an incoming edge from $A$. The notion was introduced by  Hayes-Carver and  Witt~\cite{Hairy}.

\begin{theorem} [Hayes-Carver and  Witt~\cite{Hairy}]\label{th:hairy}
Let $G$ a hairy tournament with no sources. Then there exists a small \abbr{Qk}.
\end{theorem}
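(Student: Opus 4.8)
The plan is to exploit the very rigid structure of a hairy tournament: the vertex set splits as $A \cup I$, where $G[A]$ is a tournament and each $v \in I$ has exactly one incident edge, namely an arc $a_v v$ from some $a_v \in A$. First I would observe that the ``no sources'' hypothesis forces every vertex of $A$ to have an in-neighbor, and that in-neighbor must lie in $A$ (a vertex in $I$ has no out-edges). Thus $G[A]$ is itself a source-free tournament, so by Moon's theorem it has quasi-kernels, and in fact any vertex of maximum out-degree in $G[A]$ is a quasi-kernel of $G[A]$. The key point to track is the set $L := \{v \in I : a_v \in Q_A\}$ of ``hairs'' hanging off the chosen quasi-kernel $Q_A$ of $G[A]$: these vertices are dominated by $Q_A$, but the remaining hairs (those attached to $A \setminus Q_A$) are only at distance one from $A \setminus Q_A$, hence at distance two from $Q_A$ only if their attachment point is itself dominated within one step — which it is, since $Q_A$ is a quasi-kernel of $G[A]$. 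So $Q_A$ is already a quasi-kernel of all of $G$; the only issue is its size.

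The heart of the argument is a dichotomy on how many hairs are present. Let $n = |V(G)|$, $|A| = a$, $|I| = n - a$. If $|I|$ is large — say at least $n/2$ — then I would instead build the quasi-kernel from the bottom: take $Q$ to consist of a minimal subset of $A$ dominating all of $I$ together with whatever is needed to dominate $A$ at distance two. Concretely, since each $v \in I$ has a unique in-neighbor $a_v \in A$, the set $\{a_v : v \in I\}$ dominates $I$; picking one vertex of $A$ of large out-degree (into $A$) as a ``seed'' and greedily adding vertices of $A$ that pick up new hairs, one gets an independent-in-$G[A]$? — no, that fails, so instead I would argue directly that we may always take $Q \subseteq A$ to be a quasi-kernel of $G[A]$ of size at most $(a+2)/2$ via the Chv\'atal–Lov\'asz algorithm run on $G[A]$ with a favorable ordering, or simply invoke that a source-free tournament on $a$ vertices has a quasi-kernel of size at most… this is exactly where the obstacle is (see below). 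If $|I| < n/2$, then $a > n/2$, and now the gain comes from the hairs being few: any single vertex of $A$ of maximum out-degree dominates at least $(a-1)/2$ of the other vertices of $A$, and all hairs are handled at distance two, so a quasi-kernel of $G[A]$ of size at most, say, $2$ or $3$ (which Moon/Landau guarantees in a tournament) already beats $n/2$ comfortably once $a$ is moderately large, and small cases are checked by hand.

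The main obstacle is the case where $|I|$ is close to $n/2$ and $A$ is a source-free tournament whose only small quasi-kernels are ``inefficient'' — i.e. we need that a source-free tournament on $a$ vertices has a quasi-kernel of size at most $a/2$ that, crucially, can be taken to avoid being wasteful relative to the attached hairs. In fact a single vertex is a quasi-kernel of any tournament, so a source-free tournament trivially has a quasi-kernel of size $1 \le a/2$ for $a \ge 2$; the real tension is only that $Q$ must also cover $I$ within distance two, and a size-$1$ quasi-kernel $\{x\}$ of $G[A]$ covers a hair $v$ at distance two iff $x \to a_v$ in $G[A]$, which holds for all but the in-neighbors of $x$ in $G[A]$. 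So the genuinely hard sub-case is when many hairs hang off the (few) in-neighbors of every high-out-degree vertex; there I would pass to the subtournament $G[A'] $ on $A' = A \setminus \{x\}$, which is again source-free, recurse/iterate, and bound the total by a telescoping argument — each step removes at least one vertex of $A$ while the set of ``uncovered'' hairs shrinks. Wrapping the induction so that $|Q| \le (|A \setminus (\text{covered region})| + |\text{seeds used}|)/2 \le n/2$ is the delicate bookkeeping step, and I expect that to be where the proof in \cite{Hairy} spends most of its effort.
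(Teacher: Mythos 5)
There is a genuine gap, and you have located it yourself: the case where the hairs make up at least half of the vertices and are concentrated on the non-out-neighbors of every king is exactly the case the theorem is about, and your proposal ends there with an unexecuted ``recurse and telescope'' plan. That plan is not obviously salvageable: deleting a king $x$ (and its hairs) from $A$ can create sources in $G[A\setminus\{x\}]$, the uncovered hairs do not shrink in any controlled way, and the final quasi-kernel must still contain only \emph{one} vertex of $A$ (any two vertices of $A$ are adjacent), so an induction that accumulates several ``seeds'' from $A$ cannot even produce an independent set. Your correct structural reduction --- every \abbr{Qk} of $G$ is a single king $x$ of $G[A]$ together with all hairs attached to $A\setminus(\{x\}\cup\Gamma^+(x))$, so one must find a king whose closed out-neighborhood in $A$ carries at least roughly half of the total weight $|A|+|I|$ --- is precisely the statement that still needs a proof, and no argument for it is given. (Also, your opening paragraph's claim that a quasi-kernel of $G[A]$ is automatically a quasi-kernel of $G$ is false --- hairs attached to vertices at distance two from $x$ in $G[A]$ are at distance three from $x$ --- though you correct this later.)

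The paper closes the gap with one clean idea that your proposal is missing: a weighted (blow-up) version of Landau's maximum-out-degree argument. Replace each $a_i\in A$ by the set $B_i=\{a_i\}\cup(\Gamma^+(a_i)\cap I)$, orient all edges between $B_i$ and $B_j$ according to the edge $a_ia_j$ of $G[A]$, and put a transitive tournament rooted at $a_i$ inside each $B_i$; this yields a tournament $T$ on all $n$ vertices. A maximum-out-degree vertex of $T$ necessarily lies in $A$, say $a_i$, is a king of $G[A]$, and satisfies $|\Gamma^+_T(a_i)|\ge(n-1)/2$, where $\Gamma^+_T(a_i)$ consists exactly of the hairs of $a_i$ together with the out-neighbors of $a_i$ in $A$ and \emph{their} hairs. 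Hence the complement of $\{a_i\}\cup\Gamma^+_T(a_i)$, which contains everything the quasi-kernel must add to $\{a_i\}$, has at most $(n-1)/2$ vertices, and the source-free hypothesis (giving $a_i$ an in-neighbor in $A$ that is not itself a hair) shaves off the last unit to get below $n/2$. This single counting step replaces the entire dichotomy and recursion you sketch.
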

\noindent Although the poster~\cite{Hairy} hints at the possibility of using a minimal counterexample to create a smaller one and arrive at a contradiction, it does not provide formal proof. Therefore we present a distinct and succinct proof here.
\begin{proof}
Let $A=\{a_1,\ldots,a_m\}$ and let $b(a_i)= |\Ga(a_i) \cap I|$. So by the assumption, we have $|I|=\sum_i b(a_i)$.  We define an auxiliary  tournament $T$.

Let $T$ be a tournament on the same vertex set as $G$, containing $G$ as a subgraph. For simplicity, for each vertex $a_i \in A$  we denote the set of vertices $\{a_i\}\cup (\Gamma^+(a_i)\cap I)$ by $B_i$. For any $i$ and $j$, if $G$ contains the edge $a_i a_j$, then let $T$ contain all edges $\{b_i b_j :b_i\in B_i,b_j\in  B_j\}$. Finally, on each $B_i$, take an arbitrary $b(a_i)+1$ element transitive tournament such that the  vertex $a_i$ is the root (i.e., the unique source) of this transitive tournament.

Let $a$ be a maximum out-degree vertex in $T$, thus $|\Gamma^+(a)|\ge \frac{(|V(T)|-1)}{2}$. By the construction of $T$, the vertex $a$ is from  $A$, so $a=a_i$ for some $i$. As mentioned in Section~\ref{sec:intro}, the maximum out-degree vertex is a king in the blown-up tournament $T$. By definition, it means that $a_i$ is also a king within $G[A]$.

Let us define $Q\subseteq V(G)$ as follows:
\begin{equation*}
Q =\left\{a_i \right\}\cup \bigcup \left \{ \Ga(a_j) : a_i a_j \not\in E(G) \right \}.
\end{equation*}
$Q$ is a  \abbr{Qk} of $T$, since $a_i$ is a king of $G[A]$ and $Q$ is an independent set. Even more, $Q$ is  a  small \abbr{Qk} of $G$ since $|\Gamma_{T}^+(a_i)|\ge \frac{(|V(T)|-1)}{2}$.
\end{proof}

It is worth noting that Theorem~\ref{th:hairy} can be extended to a broader class of digraphs. Specifically, the same reasoning can be applied to the directed graph $G$ satisfying the following conditions: let $V(G) = A \biguplus I$ be a partition, and assume that $G[A]$ is a tournament, $\Gamma^+(I)= \emptyset$, and $\Gamma^-(I)\subseteq A$. In words: this is a hairy-tournament-like digraph, where each vertex in $I$ has a positive in-degree.

\vspace{2mm}
\noindent
Disjoint copies of oriented four-cycles and cycles of length two are the only known directed graphs with no sources  that have no QK of size less than half of its vertices. These examples demonstrate the sharpness of the small \abbr{Qk} conjecture. On the other hand, one might believe that all strongly connected directed graphs (which, by definition, do not have sources)  have asymptotically smaller \abbr{Qk}'s. We show, however, that this is not the case: we present below a large strongly connected digraph with no sources where all QKs have at least half of the  vertices asymptotically.

\begin{example}\label{th:tight}
Consider the following hairy tournament $G$: Let it be a regular tournament on $2n+1$ vertices, i.e., every vertex has in-degree $n$ and out-degree $n$. Furthermore, for each vertex $a_i$ in $G$ let  $b(v_i)=2n+1$. The constructed digraph $G$ has $(2n+2) \cdot (2n+1)$ vertices. Any quasi-kernel may contain at most one vertex from $G$. Therefore, the smallest quasi-kernel $Q$ contains $1+ n\cdot (2n+1)$ vertices. Then
\begin{equation*}
        \frac{|Q|}{|V(G)|}=\frac{1+ n\cdot (2n+1)}{(2n+2) \cdot (2n+1)}.
\end{equation*}
Thus for each positive constant $\varepsilon$, there is large enough $n$ such that $G$ does not contain a \abbr{Qk} of size smaller than $\left(\frac{1}{2}  - \varepsilon\right)\cdot\frac{V(G)}{2}$.

Now, if we add two vertices $w_1$ and $w_2$ such that $w_1$ is joined to every vertex of $I$ by an edge, an edge joins $w_1$ to $w_2$, and $w_2$ is joined by an edge to $a$ for exactly one $a\in A$. The obtained digraph $G'$ is strongly connected. It is clear, that any \abbr{Qk} in $G$ is also a \abbr{Qk} in $G'$. (The vertex $w_1$ is a first out-neighbor, and $w_2$ is a second out-neighbor of any vertex of $Q\cap I.)$ Let now $Q$ be an arbitrary \abbr{Qk} in $G'$. Then vertex $w_1$ cannot belong to $Q$. Assume the opposite. Then no vertex from $I$ can belong to $Q$ and there is no vertex in $A$ for which every vertex from $I$ is a second out-neighbor. Finally, $w_2$ can belong to $Q$ but it is still necessary that a king of $G[A]$ belongs to $Q$, and this defines which elements of $I$ should belong to $Q$. Therefore, in this case $Q$ is nothing else but the union of a \abbr{Qk} from $G$ and $w_2$. The graph $G'$ does not have a smaller quasi-kernel than $G$. \qed{}
\end{example}

\bigskip\noindent
In this subsection, we studied so far quite dense digraphs. In the next result, we consider a much sparser example.

A graph is called \emph{unicyclic} if it is connected and contains exactly one cycle. We call a digraph  \emph{unicyclic} if it is an orientation of a unicyclic graph.
In this work we resolve the conjecture for all $n$ vertex, directed unicyclic graphs with no sources, establishing a sharp bound $\frac{n+2}{3}$ on the minimum size of a  quasi-kernel.

\begin{theorem}\label{thm:uniciclic}
Let $G$ be an $ n$-vertex directed unicyclic graph without sources.
Then there exists a \abbr{Qk} of size at most $\frac{n+2}{3}$ sharp.
\end{theorem}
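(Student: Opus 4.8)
The plan is to peel off the tree-parts of the unicyclic digraph and treat the unique cycle separately. Let $G$ be an $n$-vertex directed unicyclic digraph with no sources, and let $C$ denote the unique undirected cycle; deleting one edge of $C$ from the underlying graph leaves a spanning tree, so topologically $G$ consists of $C$ together with a forest of ``pendant trees'' hanging off the vertices of $C$. The source-free hypothesis is rigid for trees: a leaf has degree $1$, so to avoid being a source its single incident edge must be oriented \emph{into} the leaf; proceeding inward, one sees that every pendant tree must be oriented essentially ``outward from its attachment point on $C$'' (every vertex not on $C$ gets its in-edge from its parent). This already forces a lot of structure. First I would make this precise: root each pendant tree at its cycle-vertex; the source-free condition forces all tree-edges to point away from the root, \emph{except} possibly the edge at the root itself, and in fact every non-root tree vertex has in-degree exactly $1$ (from its parent), so it is reached as a first out-neighbor of its parent.

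Next I would build a quasi-kernel greedily from the leaves up, exploiting that the tree is sparse. The key local observation: if $v$ is a vertex all of whose children are leaves, then putting $v$ into $Q$ covers $v$ and all its (leaf) children directly, and each such ``claw'' of size $k+1$ (one center, $k$ leaves) contributes $1$ vertex to $Q$ while accounting for $k+1 \ge 2$ vertices of $G$ — better than the $1/2$ ratio as soon as $k \ge 2$, and exactly $1/2$ when $k=1$. For the $1/3$ bound one must be more careful: a path $a \to b \to c$ hanging off the cycle is covered by taking $\{a\}$ (it dominates $b$, and $c$ is a second out-neighbor), giving ratio $1/3$. So the strategy is: process the forest from the leaves inward, at each step selecting a vertex that (i) is independent from what has been chosen, (ii) has at least two new out-neighbors in the not-yet-dominated set, or is needed to cover a vertex about to lose all its in-neighbors — mirroring the modified CL-algorithm of Lemma \ref{th:observ} but accounting more tightly. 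A clean way to organize this: contract/delete already-covered subtrees and induct on $n$, where the base cases are a single oriented cycle (for which one checks directly that $C_m$ with no source has a \abbr{Qk} of size $\le \lceil m/3\rceil$, tight for $m$ a multiple of $3$ with the cyclic orientation) and small trees.

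I would then handle the cycle $C$ itself together with the stumps of the pendant trees that remain after the leaf-peeling. The orientation of $C$ need not be cyclic; wherever $C$ has two consecutive vertices with anti-parallel-free orientation forming a ``sink-free path'', we can select every third vertex; the only genuinely forced case is a cyclically oriented cycle, where every third vertex is exactly optimal and realizes $\frac{n+2}{3}$ in the extremal family (a cyclically oriented $C_{3k}$, or more generally the disjoint-$C_3$-like configurations stitched into a unicyclic shape by the single extra edge). The sharpness construction: take a cyclically oriented $C_3$ and attach to it a ``caterpillar'' of $a\to b\to c$ segments so that every \abbr{Qk} is forced to pick one vertex per segment plus one on the triangle — yielding $|Q| = \frac{n+2}{3}$; I would present this as a small explicit example paralleling Example \ref{th:tight}.

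The main obstacle I anticipate is the bookkeeping at the \emph{interface} between the cycle and a pendant tree whose root-edge is oriented \emph{into} the cycle (the one exceptional orientation allowed at a root): here a tree vertex adjacent to the cycle may need to be covered by a cycle-vertex, which couples the two subproblems and breaks a naive ``solve the forest, then solve the cycle'' decomposition. Resolving this cleanly — probably by a case analysis on the orientation of each root-edge, or by a carefully chosen potential function of the form $|Q| - \tfrac{1}{3}(\text{number of vertices dominated})$ that is maintained $\le 0$ throughout the greedy process — is where the real work lies; everything else is the routine sparse-graph peeling argument sketched above.
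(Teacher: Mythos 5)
Your structural starting point is right, but you stop short of the observation that makes the whole problem rigid. A connected unicyclic graph on $n$ vertices has exactly $n$ edges, so the in-degrees sum to $n$; the source-free hypothesis then forces \emph{every} vertex to have in-degree exactly $1$. Consequently the unique cycle must be a directed cycle, and every pendant tree is an out-arborescence rooted at a cycle vertex with all edges directed away from the root. The ``main obstacle'' you single out at the end --- a pendant tree whose root edge is oriented into the cycle --- cannot occur: the cycle vertex already receives its unique in-edge from its cycle predecessor, and an inward-pointing tree edge would cascade down to a leaf with in-degree $0$. So the interface problem you defer as ``where the real work lies'' is vacuous, and recognizing this is essential to any clean proof.

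The second, more serious gap is that the greedy leaf-peeling argument is never actually executed, and the local moves you describe do not obviously achieve ratio $1/3$: a single pendant leaf or a claw with $k=1$ gives ratio $1/2$ by your own accounting, and you offer only the hope of a potential-function argument to absorb these losses. The paper's proof avoids all of this bookkeeping with an averaging trick: since every vertex has a unique in-neighbor, one can $3$-color $V(G)$ by ``distance mod $3$'' (color the cycle $1,2,3,1,2,3,\dots$ with a small adjustment depending on $\ell \bmod 3$, then propagate $c(u)\equiv c(u^-)+1$ down the out-trees). Each color class is independent, and each of the three classes --- possibly augmented by a single cycle vertex when $3\nmid\ell$ --- is a quasi-kernel, because every vertex is reached in at most two steps from the class two colors behind it. The pigeonhole principle then yields one of size at most $\frac{n}{3}$, $\frac{n+1}{3}$, or $\frac{n+2}{3}$ according to $\ell \bmod 3$, with sharpness already witnessed by directed cycles. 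Your proposal has no substitute for this global averaging step, which is precisely what lets the $1/3$ bound fall out without the delicate per-vertex accounting your sketch would require.
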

\begin{proof}
Note, that every orientation of a unicyclic graph with no sources contains a directed cycle $v_1v_2\dots v_{\ell}v_1$ for some integer $\ell$, and for every $i=1,\ldots,\ell$ there is a directed tree rooted at $v_i$ that shares exactly one vertex with the cycle, and all of its edges are directed away from $v_i$. Thus let $G$ be a directed cycle $v_1v_2\dots v_{\ell}v_1$ and vertex-disjoint rooted spanning trees $T_{v_i}$ (rooted at $v_i$) for $i\in [n]$.

Let $T_v$ be a directed tree that has a single source $v$. We recursively define a vertex coloring of $T_v$ using three colors. For each $i \in [3]$, we define a coloring function $c_{i}:V(T_v)\rightarrow [3]$ that assigns a color to every vertex of $T_v$, subject to the following conditions: $c_i(v)=i$ and for every vertex $u\neq v$ in $T_v$ we require $c_i(u)\equiv c_i(u^-)+1\bmod{3}$, where $u^-$ is the unique vertex of $\Gamma^-(u)$.

Next, we define a vertex coloring of $G$. At first, we color the vertices of the directed cycle. If $3\mid \ell$ or $3\mid (\ell +2)$ then $c(v_i)\equiv i$ for $i\in[\ell]$. If $3\mid \ell+1$ then set $c(v_i)\equiv i$ for $i\in[\ell-1]$ and $c(v_{\ell})=2$. After coloring all roots, the remaining vertices are colored based on the coloring function explained in the preceding paragraph. Finally, we have

\begin{itemize}
\item If $3\mid \ell$, then each of the color classes is a \abbr{Qk} of $G$; since they are disjoint, the digraph $G$ contains a \abbr{Qk} of size at most $\frac{n}{3}$.
\item If $3\mid\ell-1$, then the color classes $1$ and $2$ are quasi-kernels of $G$. Even more so, the color class $3$ with the vertex $v_1$ is also a \abbr{Qk}. Thus, by the pigeonhole principle, the digraph $G$ contains a \abbr{Qk} of size at most $\frac{n+1}{3}$.
\item If $3\mid\ell+1$, then color class $2$ is a \abbr{Qk} of $G$. Moreover, the color class $1$ with the vertex $v_{\ell-1}$ is also a \abbr{Qk}, and the color class $3$ with the vertex $v_1$ is also a \abbr{Qk}. Thus by the pigeonhole principle, the digraph $G$ contains a \abbr{Qk} of size at most $\frac{n+2}{3}$.
\end{itemize}
It is easy to see that these bounds are sharp by considering directed cycles.
\end{proof}

\section{A very recent development}
Long after submitting this paper but well before its revision a very interesting new preprint was published on the topic (see \cite{spiro}). We cannot go to details, but we list here the main points. First it proves that every source-free digraph contains a \abbr{QK} of size at most $n-\sqrt{n}.$ While this is even not in the range of  $(1-\epsilon)n$ this is the first genue upper bound which is separated from $n.$ Spiro also introduced the notion of $q$-kernels: this are independent subsets, from where every other vertex can be reached within $q$ steps. (Therefore the \abbr{QK}s are $2$-kernels.) The paper proves that for each $q\ge 3$, in any source-free digraph there always exists a $q$-kernel of size at most $n/2.$ Spiro also introduced the notion of \emph{large \abbr{QK}}: a \abbr{QK} is \emph{large}, if $|Q \cup \Gamma^+(Q)| \ge n/2.$ Then the \emph{large \abbr{QK} conjecture} states that every source-free digraph contains a large \abbr{QK}. And the author proved very interesting proposition, that the small \abbr{QK} conjecture implies the large \abbr{QK} conjecture.

\section{Quasi-kernel in Infinite Graphs}
Many combinatorial objects do not have sensible analogs in the infinite environment. However, for \abbr{Qk}, such an analogue can be found. One can recognize easily, that the Chv\'atal-Lov\'asz Theorem, and even (its special case for tournaments) the Landau theorem (\cite{landau}) do not hold for infinite digraphs (see \cite{quasi}). Indeed, consider the tournament ${\mathbb{Z}}$ on the underlying set $\mathbb{Z}$ with all edges directed upward (${xy} \in E(\mathbb{Z}) \leftrightarrow x < y$). It is clear that there is neither a kernel nor a quasi-kernel in $\mathbb{Z}$.

However, it is easy to see that $\mathbb{Z} =  \Gamma^-(0) \cup \Ga(1)$, and this holds  more generally, as we see next.
\begin{theorem}[Erd\H{o}s and Soukup,~\cite{quasi}]\label{th:in-out}
    In any countable infinite tournament $T$ there exists a \abbr{Qk} or there exist vertices $x$ and $y$ such that $V(T) =  \Gamma^-(x) \cup \Ga(y)$.
\end{theorem}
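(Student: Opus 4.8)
The plan is to argue by contradiction: assume $T$ has no quasi-kernel and, for every pair of vertices $x,y$, $V(T)\neq\Gamma^-(x)\cup\Ga(y)$. I want to extract from these two failures a construction that produces, in a limiting process, a quasi-kernel — contradicting the first assumption. Since $T$ is countable, enumerate $V(T)=\{t_0,t_1,t_2,\dots\}$. The natural attempt is to build a nested or increasing sequence of finite independent sets $Q_0\subseteq Q_1\subseteq\cdots$ together with some bookkeeping that guarantees that $\bigcup_k Q_k$ two-step-dominates more and more vertices, so that in the end every $t_m$ lies in $\Ga_2(\bigcup_k Q_k)$ while the union stays independent.

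The first concrete step is to pin down what the hypothesis $V(T)\neq\Gamma^-(x)\cup\Ga(y)$ buys us. For each pair $(x,y)$ there is a witness vertex $z=z(x,y)$ with $z\notin\Gamma^-(x)$ (so the edge between $z$ and $x$, if $z\neq x$, points from $x$ to $z$, i.e. $x\in\Gamma^-(z)$) and $z\notin\Ga(y)$ (so, if $z\neq y$, the edge points from $z$ to $y$, i.e. $y\in\Ga(z)$). In tournament language: $z$ dominates $y$ and is dominated by $x$. This is exactly the kind of local move one needs to enlarge a partial quasi-kernel while keeping reachability under control, and it is the analogue of the "pick a vertex with an out-neighbor into the remaining set" step of the CL-algorithm. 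The second step is to run a CL-type greedy process transfinitely/inductively along the enumeration: maintain a current finite independent set $Q$ and the set $R$ of vertices not yet in $\Ga_2(Q)$; if $R=\emptyset$ at some stage we are done, and if the process runs forever I need to argue the union is still a quasi-kernel. The key point where the $\Gamma^-(x)\cup\Ga(y)$ hypothesis enters is in showing the process cannot get "stuck" with $R$ nonempty and no legal move — a stuck configuration should be massaged into a pair $(x,y)$ whose witness $z$ both re-enables a move and does not destroy independence.

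The main obstacle I expect is the passage to the limit. Unlike the finite CL-algorithm, here the greedy set can grow without terminating, and a naive infinite union of independent sets is still independent (independence is a property of pairs, hence preserved under unions of chains), but it need not be a quasi-kernel: a vertex could have all of its in-neighbors "escape" to later-added quasi-kernel vertices, i.e. be dominated only by $Q$-vertices and thus fail to be in $\Ga_2(Q)$ — wait, that is fine, being dominated by $Q$ puts it in $\Ga_1(Q)\subseteq\Ga_2(Q)$; the real danger is a vertex $v$ all of whose out-neighbors lie in $Q$ and all of whose in-neighbors also lie in $Q\cup\Ga(Q)$... again fine. The genuine failure mode is a vertex $v\notin Q$ with $\Ga(v)\cap Q=\emptyset$ and $\Gamma^-(v)\cap\Ga(Q)=\emptyset$; I will need the bookkeeping (process $t_m$ at stage $m$) to forbid leaving such a $v$ behind forever, and this is where the argument must either invoke König-type / compactness reasoning or, more likely given the "countable" hypothesis, a careful diagonalization ensuring each $t_m$ is permanently settled by a finite stage. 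I would therefore organize the proof as: (i) reduce to the contradiction setup; (ii) state the witness-vertex lemma; (iii) define the stage-by-stage construction with a clear invariant ("$Q_k$ independent, and $t_0,\dots,t_{k-1}\in\Ga_2(Q_k)$"); (iv) prove the invariant is maintainable using the witness lemma to handle the stuck case; (v) take the union and conclude it is the desired quasi-kernel, contradicting the initial assumption. I expect step (iv), the stuck-case analysis, together with verifying the invariant survives adding a witness vertex (independence!), to be the technical heart.
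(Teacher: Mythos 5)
There is a genuine gap, and in fact the whole strategy is misdirected. The central device of your plan --- a nested chain of finite independent sets $Q_0\subseteq Q_1\subseteq\cdots$ grown by a CL-type greedy process and then unioned --- cannot even begin in a tournament: any two vertices of $T$ are joined by an edge, so every independent set has at most one element, and a quasi-kernel of $T$ is necessarily a single vertex $x$ with $\Ga_2(\{x\})=V(T)$. All of the machinery you flag as the technical heart (the stuck-case analysis, preservation of independence when adding witnesses, the passage to the limit and the diagonalization over the enumeration) is therefore inapplicable, and the proposal as written does not contain a proof --- it defers exactly the steps that would have to do the work.

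The irony is that the one correct observation you do make --- that the failure of $V(T)=\Gamma^-(x)\cup\Ga(y)$ produces a witness $z$ with $x\in\Gamma^-(z)$ and $y\in\Ga(z)$, i.e.\ a directed path $x\to z\to y$ --- already finishes the proof with no construction at all, and this is essentially the paper's two-line argument run in the other direction. Fix an arbitrary $x$. If $\Ga_2(\{x\})=V(T)$ then $\{x\}$ is a quasi-kernel and you are done. Otherwise pick $y\notin\Ga_2(\{x\})$ and check directly that $V(T)=\Gamma^-(x)\cup\Ga(y)$: for any $z\notin\Ga(y)$, either $z=y$ (and $y\in\Gamma^-(x)$ since $y\notin\Ga_1(\{x\})$ forces the edge to point from $y$ to $x$), or $z\to y$ in the tournament; in the latter case $x\to z$ would make $xzy$ a directed path of length two, contradicting $y\notin\Ga_2(\{x\})$, so $z=x$ (covered, since $y\to x$ gives $x\in\Ga(y)$... note here one instead uses $x\notin\Ga(y)$ is impossible unless handled, but $x\in\Ga(y)$ holds because the $xy$ edge points from $y$ to $x$) or $z\to x$, i.e.\ $z\in\Gamma^-(x)$. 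Equivalently, in your contradiction setup: if the second disjunct fails for every pair, then for every $x$ and every $y\notin\Ga_1(\{x\})$ the witness $z$ for the pair $(x,y)$ must be distinct from both $x$ and $y$ and yields $y\in\Ga_2(\{x\})$, so every singleton is already a quasi-kernel. No limit, no compactness, and no use of countability beyond what the statement asserts.
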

\begin{proof}
Let $x\in V$ be arbitrary. If  $y\notin \Ga_2{x}$ then $V=\Gamma^-{x} \cup \Gamma^+{y}$.  Indeed,  if $z\notin \Ga{y}$ then $(z,y)\in E(G)$ but $xzy$ is not a directed path of length two in $G$ by the choice of $y$, so $(x,z)\notin E$. Thus $(z,x)\in E$, i.e $z\in \Gamma^-{x} $. Since $z$ was arbitrary, we proved the statement.
\end{proof}

\noindent A similar statement holds more generally in directed graphs.
\begin{theorem}[Erd\H{o}s, Hajnal and Soukup,  {\cite[Theorem 2.1]{quasi}}] \label{th:hajnal}
In any countable infinite directed graph $G$  there exist two disjoint, independent subsets $A$ and $B$ such that $V(G) =  \Gamma^-_2(A) \cup \Ga_2(B)$.
\end{theorem}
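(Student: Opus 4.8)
The plan is to mimic the proof of Theorem~\ref{th:in-out} but in a transfinite, simultaneous back-and-forth construction, building $A$ and $B$ as increasing unions of finite independent sets. Enumerate $V(G) = \{x_0, x_1, x_2, \dots\}$. I would maintain finite independent sets $A_k \subseteq A$, $B_k \subseteq B$ with $A_k \cup B_k$ independent and $A_k \cap B_k = \emptyset$, together with the invariant that every vertex with index $<k$ lies in $\Gamma^-_2(A_k) \cup \Gamma^+_2(B_k)$ or has been permanently ``committed'' to one of the two sides in a way that will eventually place it in the requisite neighborhood. At stage $k$ I look at $x_k$; if it is already two-in-dominated by $A_k$ or two-out-dominated by $B_k$, do nothing. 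Otherwise I must add $x_k$ (or a nearby vertex) to $A$ or to $B$; the choice is dictated by trying to keep $A \cup B$ independent.

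The key mechanism is the dichotomy from Theorem~\ref{th:in-out}'s proof, localized: given the current vertex $x_k$, either there is a vertex $y$ with $x_k y \in E$ but no out-neighbor of $x_k$ dominating... more precisely, I want to argue that a vertex that is ``stuck'' (not addable to either side without destroying independence, and not already dominated) forces a structural statement that lets me dominate it anyway. Concretely: suppose $x_k \notin \Gamma^-_2(A_k) \cup \Gamma^+_2(B_k)$ and $x_k$ is adjacent (in either direction) to some element of $A_k \cup B_k$. If $x_k$ has an in-neighbor $u \notin A_k \cup B_k$ with $u$ itself not adjacent to $A_k \cup B_k$, put $u$ into $A$ and $x_k$ becomes a first-out-neighbor... no — I need $x_k \in \Gamma^-_2(A)$, i.e.\ $x_k$ reaches into $A$ within two steps \emph{against} arrows, so I want $x_k \to \cdot \to a$. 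Symmetrically for $B$. So I would look for a short directed path from $x_k$ into a ``free'' region and grow $A$ along it, or a short path out of a free region into $x_k$ and grow $B$; and the content of Theorem~\ref{th:in-out} is exactly that if no such path exists the adjacency structure collapses so that $x_k$ is already dominated. I would phrase this as: at each stage, among the not-yet-handled vertices choose one witnessing a failure, apply the finite analogue of the $\mathbb{Z}$-argument to it, and either dominate it for free or absorb one new vertex into $A$ or $B$ while preserving independence and monotonicity.

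Then set $A = \bigcup_k A_k$ and $B = \bigcup_k B_k$; these are independent and disjoint because each $A_k \cup B_k$ is, and every vertex $x_k$ is handled by stage $k$ (or later, by a vertex that dominates it getting added), so $V(G) = \Gamma^-_2(A) \cup \Gamma^+_2(B)$. Countability is used to run the construction in $\omega$ steps and to ensure the enumeration exhausts $V(G)$.

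The hard part will be making the ``stuck vertex forces domination'' step actually airtight in the directed (non-tournament) setting: in a tournament every non-adjacency is an adjacency the other way, which is what powers the clean $z \notin \Gamma^+(y) \Rightarrow (z,x) \notin E$ deduction; in a general digraph two vertices can simply be non-adjacent, so the dichotomy is genuinely weaker and I expect one needs to be cleverer about \emph{which} vertex to process next (e.g.\ always pick a vertex adjacent to the current frontier, or alternate the roles of $A$ and $B$ carefully) and possibly to allow $x_k$ to be dominated by a vertex added several stages later — which is why I build in the ``committed'' bookkeeping rather than demanding immediate domination. Getting the invariants to survive the interaction of the two sides (adding to $A$ must not block a future addition to $B$ and vice versa) is the real technical obstacle; I would resolve it by only ever adding vertices that are non-adjacent to the entire current $A_k \cup B_k$, and arguing that such a vertex can always be found along the relevant short path, or else the failure is spurious.
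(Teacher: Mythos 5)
The paper states this theorem without proof, citing \cite[Theorem 2.1]{quasi}, so there is no in-paper argument to compare against; your proposal therefore has to stand on its own, and as written it does not. What you have is a plan whose load-bearing step --- ``a stuck vertex forces a structural statement that lets me dominate it anyway'' --- is never established; you yourself flag it as ``the hard part.'' The difficulty is not cosmetic. The proof of Theorem~\ref{th:in-out} runs entirely on the tournament property that a non-edge $yz$ forces the edge $zy$, which converts ``$z$ is not out-dominated by $y$'' into ``$z$ is an in-neighbor of $x$.'' In a general digraph this implication simply vanishes, and your construction admits a concrete deadlock: suppose at some stage a vertex $x$ has all of its out-neighbors already in $B_k$ and all of its in-neighbors already in $A_k$. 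Then $x$ cannot be added to either side (it is adjacent to both), $x\in\Gamma^-_2(A)$ would require an out-neighbor of $x$ with an edge into $A$ --- but every out-neighbor of $x$ lies in $B$, disjoint from $A$, and nothing forces an edge from $B$ to $A$ --- and symmetrically $x\in\Gamma^+_2(B)$ would require an edge from $B$ into $\Gamma^-(x)\subseteq A$. No later addition can rescue $x$, since every candidate intermediate vertex on a length-$\le 2$ path from $x$ into $A$ (or from $B$ into $x$) is already committed to the wrong set. Your proposed safeguard --- only ever adding vertices non-adjacent to all of $A_k\cup B_k$ --- does not prevent this configuration from arising for a vertex processed later, after its whole neighborhood has been absorbed.

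Two further points. First, you impose the invariant that $A_k\cup B_k$ is independent, but the theorem only asks that $A$ and $B$ each be independent and disjoint; edges between $A$ and $B$ are allowed, and insisting on joint independence makes the greedy construction strictly harder for no gain. Second, the ``committed'' bookkeeping that is supposed to let a vertex be dominated by something added at a later stage is exactly where the proof content lives, and it is left entirely unspecified: you never say what a commitment is, what guarantees it is eventually honored, or why honoring one commitment cannot invalidate another. Until the stuck-vertex dichotomy is replaced by an actual lemma valid for general digraphs (which is what \cite{quasi} supplies, by a genuinely different and more careful construction), this is a proof outline with a hole at its center rather than a proof.
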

\noindent We do not consider this result as a genuine generalization of Theorem~\ref{th:CL} since it can happen that $\Gamma^-(A) \cap \Ga(B) \ne \emptyset$. The paper suggested a conjecture which would be a real generalization

\medskip
\noindent For a digraph $G$, an independent vertex set $K \subseteq V(G)$ is a \emph{sink}, if $\Gamma^-_1(K)=V(G)$. An independent vertex set $Q$ is a \emph{quasi-sink} if $\Gamma^-_2(Q)=V(G)$.
Using these notions we have the following conjecture for countably infinite digraphs:
\begin{conjecture}[\cite{quasi, Fete}]\label{th:inf}
    In any countable infinite directed graph $G$ there exists a vertex partition $V(G)= A \cup B$ such that in $G[A]$ there exists a quasi-sink and in $G[B]$ there exists a quasi-kernel.
\end{conjecture}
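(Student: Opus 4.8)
The statement is the (still open) common generalisation of the Chv\'atal--Lov\'asz theorem to the countable setting, so what follows is an attack plan rather than a proof. The natural framework is a transfinite recursion of length $\omega$: fix an enumeration $V(G)=\{v_n:n\in\omega\}$ and build increasing finite approximations $A_0\subseteq A_1\subseteq\cdots$ and $B_0\subseteq B_1\subseteq\cdots$ of the two sides of the partition, together with finite ``seed'' sets $S_n\subseteq A_n$ (a partial quasi-sink) and $Q_n\subseteq B_n$ (a partial quasi-kernel). The invariants to maintain are: $A_n\cap B_n=\emptyset$; each of $S_n$, $Q_n$ is independent in $G$; a vertex once placed in $A$ or in $B$ never moves; and a bookkeeping ledger certifying that every already-placed vertex either already lies within backward distance two of $S_n$ inside $G[A_n]$ (if it is in $A_n$), respectively within forward distance two of $Q_n$ inside $G[B_n]$ (if it is in $B_n$), or else carries a \emph{reservation}: a specified vertex or length-two path earmarked to witness this later. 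In the limit one sets $A=\bigcup_n A_n$, $B=\bigcup_n B_n$, $S=\bigcup_n S_n$, $Q=\bigcup_n Q_n$; then $A\cup B=V(G)$ provided every $v_n$ gets placed, while $S$ is a quasi-sink of $G[A]$ and $Q$ a quasi-kernel of $G[B]$.

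At stage $n$ one dovetails two tasks so that both eventually terminate on every vertex: (i) \emph{place} $v_n$ if it has not yet been placed, and (ii) \emph{discharge} the oldest outstanding reservation. For (i), suppose we decide to send $v_n$ to $B$. If $v_n$ already has an in-neighbour in $Q_n$, or an in-neighbour whose in-neighbour lies in $Q_n$, with all intermediate vertices in $B_n$, then nothing is needed. Otherwise the cheapest repair is to put $v_n$ itself into $Q_{n+1}$, which is legal precisely when $v_n$ has no $G$-edge to or from $Q_n$. If that fails because $v_nv_j\in E(G)$ for some $v_j\in Q_n$ (an edge the other way would already have made $v_n$ happy), we instead seek an in-neighbour $w$ of $v_n$ that we are free to commit to $B$ and to $Q$, or a length-two in-path $w'\to w\to v_n$ that we may reserve entirely on the $B$-side; symmetric moves handle $v_n\in A$. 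The purpose of task (ii) is that each reserved witness generates new placement demands --- its intermediate vertices must themselves go to $B$ (resp.\ $A$) --- and these must be served before a later placement destroys them.

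The main obstacle is exactly the non-monotonicity of ``distance two inside an induced subdigraph'': committing a later vertex to $A$ can delete the only short in-path keeping some $v\in B$ happy, and a priori nothing guarantees that an alternative witness for $v$ survives. Controlling this seems to demand a finite-injury style priority argument --- each requirement ``$v$ is happy on its side'' is given a priority, may be injured only by the placement or reservation moves of higher-priority requirements, and one must show each requirement is injured only finitely often and is eventually met. The crux is a combinatorial lemma asserting that at every stage \emph{some} available repair for $v_n$ is conflict-free; this is plausible but is where the real work lies, and it may conceivably fail in full generality, which would be consistent with the conjecture still being open.

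A more structural alternative is to start from the Erd\H{o}s--Hajnal--Soukup partition of Theorem~\ref{th:hajnal}: disjoint independent $A^{\star},B^{\star}$ with $V(G)=\Gamma^-_2(A^{\star})\cup\Gamma^+_2(B^{\star})$, whose only defect is the overlap $M=\Gamma^-_2(A^{\star})\cap\Gamma^+_2(B^{\star})$; indeed, if that union were disjoint one checks easily that $A^{\star}$ is a quasi-sink of $G[\Gamma^-_2(A^{\star})]$ and $B^{\star}$ a quasi-kernel of $G[\Gamma^+_2(B^{\star})]$, so only the overlap is at issue. One would try to split $M=M_A\cup M_B$, set $A=(\Gamma^-_2(A^{\star})\setminus M)\cup M_A$ and $B=(\Gamma^+_2(B^{\star})\setminus M)\cup M_B$, and recover a quasi-sink of $G[A]$ and a quasi-kernel of $G[B]$ --- which again reduces to choosing, coherently for all of $M$ at once, short witnessing paths that stay on the correct side. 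Independently, it seems wise to warm up on semicomplete digraphs (generalising Theorem~\ref{th:in-out}), then on quasi-transitive and locally semicomplete digraphs, echoing the finite disjoint-quasi-kernel results of Heard and Huang, since there the witnessing paths are short and abundant and the conflict analysis should be tractable.
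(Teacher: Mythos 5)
This statement is an open conjecture: the paper offers no proof of it, only the remark that it has been verified for certain special classes of countable digraphs (finite in-degrees, finite chromatic number, complements omitting some $K_n$, complements with finite chromatic number, locally finite complements). You correctly recognize this and explicitly present an attack plan rather than a proof, so there is no proof to compare and nothing to certify. Your plan does not close the conjecture --- you yourself identify the crux (a conflict-free repair lemma surviving the non-monotonicity of distance-two reachability in induced subdigraphs) as the unresolved point --- and that assessment is accurate.

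For what it is worth, your ``structural alternative'' is exactly the relationship the paper itself draws: Theorem~\ref{th:hajnal} gives $V(G)=\Gamma^-_2(A^{\star})\cup\Gamma^+_2(B^{\star})$ with possibly overlapping pieces, and the paper states that Conjecture~\ref{th:inf} is the proposed genuine strengthening in which the two pieces partition $V(G)$ and the witnessing paths stay inside their own class (the paper stresses that cut edges between the classes are not used). Your observation that each length-two in-path witnessing $v\in\Gamma^-_2(A^{\star})$ already lies entirely inside $\Gamma^-_2(A^{\star})$ is correct, so the whole difficulty is indeed concentrated in splitting the overlap without severing those paths. Your suggested warm-up classes (semicomplete, quasi-transitive, locally semicomplete) are reasonable but note they only partially coincide with the classes for which the paper reports the conjecture is already known.
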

\noindent Please observe, that the cut edges between the partition classes are not used.

There are several special classes of countable infinite digraphs where the conjecture has been proved. We give here some examples. Each of the following classes contain infinite digraphs which somewhat ``resemble'' the finite digraphs:
\begin{enumerate}[{\rm (1)}]
\item A digraph whose in-degrees are finite has a \abbr{Qk}.
\item A digraph with a finite chromatic number has a \abbr{Qk}.
\item If for the infinite digraph $G$ there exists an $n\ge 2$ such that the complement of $G$ does not contains $K_n$, then $G$ satisfies Conjecture \ref{th:inf}.
\item Each infinite digraph $G$ whose complement has finite chromatic number, $G$ satisfies Conjecture \ref{th:inf}.
\item Each infinite digraph $G$ whose complement is locally finite (that is, each vertex has finite degree), $G$ satisfies Conjecture \ref{th:inf}.
\end{enumerate}

\bibliographystyle{plain}

\end{document}